\documentclass{amsart}

\usepackage{amsmath}
\usepackage{amsthm}
\usepackage{amssymb}
\usepackage{xspace}
\usepackage{color}
\usepackage{mathtools}
\usepackage{enumerate}
\usepackage{tikz}

\usetikzlibrary{matrix,arrows}

\theoremstyle{plain}
\newtheorem*{definition}{Definition}
\newtheorem{theorem}{Theorem}
\newtheorem{lemma}[theorem]{Lemma}
\newtheorem{corollary}[theorem]{Corollary}
\newtheorem{proposition}[theorem]{Proposition}

\theoremstyle{definition}

\newtheorem{example}[theorem]{Example}

\newcommand{\comment}[1]{}
\newcommand{\rea}{\ensuremath{\mathbf{R}}\xspace}
\newcommand{\nat}{\ensuremath{\mathbf{N}}\xspace}
\newcommand{\probs}{\ensuremath{\mathcal{P}}\xspace}
\newcommand{\intd}[1]{\,\mathrm{d}#1 \,}
\newcommand{\closure}[1]{\ensuremath{\overline{#1}}\xspace}
\newcommand{\dimh}{\ensuremath{\dim_{\mathrm{H}}}\xspace}
\newcommand{\dimf}{\ensuremath{\dim_{\mathrm{F}}}\xspace}
\newcommand{\dimfm}{\ensuremath{\dim_{\mathrm{FM}}}\xspace}
\newcommand{\dimfc}{\ensuremath{\dim_{\mathrm{FC}}}\xspace}

\newcommand\restr[2]{{								
  \left.\kern-\nulldelimiterspace						
  #1										
  \vphantom{\big|}								
  \right|_{#2}									
  }}

\DeclareMathOperator{\supp}{supp}
\DeclareMathOperator{\re}{Re}
\DeclareMathOperator{\sinc}{sinc}
\DeclarePairedDelimiter{\ceil}{\lceil}{\rceil}
\DeclarePairedDelimiter{\floor}{\lfloor}{\rfloor}

\begin{document}
\title{On the Fourier dimension and a modification}

\author{Fredrik Ekstr\"om}
\address{Centre for Mathematical Sciences\\ Lund University\\ Box 118\\ 22 100 Lund\\ SWEDEN}
\email{fredrike@maths.lth.se}
\author{Tomas Persson}
\email{tomasp@maths.lth.se}
\author{J\"org Schmeling}
\email{joerg@maths.lth.se}

\begin{abstract}
We give a sufficient condition for the Fourier dimension of a countable union of
sets to equal the supremum of the Fourier dimensions of the sets in the union,
and show by example that the Fourier dimension is not countably stable 
in general. A natural approach to finite stability
of the Fourier dimension for sets would be to try to prove that the
Fourier dimension for measures is finitely stable, but we give an example
showing that it is not in general. We also describe some situations where the
Fourier dimension for measures is stable or is stable
for all but one value of some parameter. Finally we propose a way 
of modifying the definition of the Fourier dimension so that it becomes
countably stable, and show that for each $s$ there is a class of sets
such that a measure has modified Fourier dimension greater than or
equal to $s$ if and only if it annihilates all sets in the class.
\end{abstract}

\maketitle

\section{Introduction}
Let $A$ be a Borel subset of $\rea^d$. One way to prove a lower bound for the
Hausdorff dimension of $A$ is to consider integrals of the form
	$$
	I_s(\mu) = \iint |x - y|^{-s} \intd{\mu}(x) \intd{\mu}(y);
	$$
if $\mu$ is a Borel measure such that $\mu(A) > 0$ and $I_s(\mu) < \infty$
for some $s$, then $\dimh A \geq s$.
For a finite Borel measure $\mu$, the
\emph{Fourier transform} is defined as
	$$
	\widehat\mu(\xi) = \int e^{-2\pi i \xi \cdot x} \intd{\mu}(x),
	$$
where $\xi \in \rea^d$ and $\cdot$ denotes the Euclidean inner product.
It can be shown \cite[Lemma~12.12]{mattila} that if $\mu$ has compact
support then
	$$
	I_s(\mu) = \text{const.}(d, s) \int |\widehat\mu(\xi)|^2 |\xi|^{s - d} \intd{\xi}
	$$
for $0 < s < d$, and thus $I_{s_0}(\mu)$ is finite if $\widehat\mu(\xi) \lesssim |\xi|^{-s/2}$
for some $s > s_0$ (here and in the remainder, $f(\xi) \lesssim g(\xi)$ means that
there exists a constant $C$ such that $|f(\xi)| \leq C|g(\xi)|$ for all $\xi$). This
motivates defining the \emph{Fourier dimension} of $A$ as
	$$
	\dimf A = \sup \left\{ s \in [0, d]; \, \widehat\mu(\xi) \lesssim |\xi|^{-s/2}, \,
	\mu \in \probs(A) \right\},
	$$
where $\probs(A)$ denotes the set of Borel probability
measures on $\rea^d$ that give full measure to $A$ (it would not make any difference if the supremum
was taken only over such measures with compact support, for if
$\widehat\mu(\xi) \lesssim |\xi|^{-s / 2}$
then there is a compactly supported probability measure
$\nu$ that is absolutely continuous with respect to $\mu$ and satisfies
$\widehat\nu(\xi) \lesssim |\xi|^{-s / 2}$ by Lemma~\ref{smoothcutlemma} below).
Thus the Fourier dimension is a lower bound for the Hausdorff
dimension. The Fourier dimension of a finite Borel measure $\mu$ on $\rea^d$ is defined as
	$$
	\dimf \mu = \sup\left\{ s \in [0, d]; \, \widehat\mu(\xi) \lesssim |\xi|^{-s/2} \right\},
	$$
or equivalently
	$$
	\dimf \mu = \min \left(
	d, \,\,
	\liminf_{|\xi| \to \infty} \frac{-2 \log |\widehat\mu(\xi)|}{\log|\xi|}
	\right),
	$$
so that
	$$
	\dimf A = \sup\left\{\dimf \mu; \, \mu \in \mathcal{P}(A) \right\}.
	$$

If $A \subset B$ then $\mathcal{P}(A) \subset \mathcal{P}(B)$ and hence
	$$
	\dimf (A) = \sup \{ \dimf \mu; \, \mu \in \mathcal{P}(A) \}
	\leq
	\sup \{ \dimf \mu; \, \mu \in \mathcal{P}(B) \}
	= \dimf(B),
	$$
showing that the Fourier dimension is monotone. It seems not to be previously known
whether the Fourier dimension is stable under finite or countable unions,
that is, whether
	\begin{equation}	\label{setstab}
	\dimf\left(\bigcup_k A_k \right) = \sup_k \, \dimf A_k,
	\end{equation}
where $\{ A_k \}$ is a finite or countable family of sets. The inequality
$\geq$ follows from the monotonicity, but there might be sets for which
the inequality is strict. In Section~\ref{sec2} we show that \eqref{setstab}
holds if for each $n$ the intersection $A_n \cap \closure{\bigcup_{k \neq n} A_k}$ has
small ``modified Fourier dimension'' (defined below), and in particular if all such
intersections are countable. We also give an example of a countably infinite family
of sets such that \eqref{setstab} does \emph{not} hold.

This still leaves open the question of \emph{finite} stability. The most
straightforward approach would be to prove a corresponding stability for
the Fourier dimension of measures, namely that
	\begin{equation}	\label{measstab}
	\dimf (\mu + \nu) = \min(\dimf \mu, \, \dimf \nu).
	\end{equation}
From this one could derive the finite stability for sets, using that
any probability measure on $A \cup B$ is a convex combination of
probability measures on $A$ and $B$. The inequality $\geq$
always holds in \eqref{measstab} since the set of functions that
are $\lesssim |\xi|^{-s/2}$ is closed under finite sums, but we give an example
in Section~\ref{sec3} showing that strict inequality can occur. We also
describe some situations in which \eqref{measstab} \emph{does} hold ---
this seems to be the typical case.

To achieve countable stability, we consider the following modification
of the Fourier dimension.

\begin{definition}
The \emph{modified Fourier dimension} of a Borel set $A \subset \rea^d$
is defined as
	$$
	\dimfm A = \sup\left\{\dimf \mu; \, \mu \in \probs(\rea^d), \,
	\mu(A) > 0 \right\},
	$$
and the modified Fourier dimension of a finite Borel measure $\mu$ is
defined as
	$$
	\dimfm \mu = \sup\left\{ \dimf \nu; \, \nu \in \probs(\rea^d), \,
	\mu \ll \nu \right\},
	$$
where $\ll$ denotes absolute continuity.
\end{definition}
Thus
	$$
	\dimfm A = \sup \left\{ \dimfm \mu; \, \mu \in \probs(A) \right\}.
	$$
In Section~\ref{sec4}, we investigate some basic properties of the modified
Fourier dimension, and give examples to show that it is different
from the usual Fourier dimension and the Hausdorff dimension.

In Section~\ref{sec5}, we show that if $\mu$ annihilates all
the common null sets for the measures that have modified Fourier dimension
greater than or equal to $s$, then $\dimfm \mu \geq s$.
Other classes of measures that can be characterised by their null sets in this way are the measures
that are absolutely continuous to some fixed measure, and, less trivially,
the measures $\mu \in \probs([0, 1])$ such that
$\lim_{|\xi| \to \infty} \widehat\mu(\xi) = 0$ (see \cite{lyons}). A necessary condition
for such a characterisation to be possible is that the class of measures be
a \emph{band}, meaning that any measure that is absolutely continuous to some
measure in the class lies in the class. The
definition of the modified Fourier dimension is natural from this point
of view, since the class of measures that have modified Fourier dimension
greater than or equal to $s$ is the smallest band that includes the measures
that have (usual) Fourier dimension greater than or equal to $s$.

\subsection{Previous work}
Here we mention briefly some of the previous work related to the Fourier dimension that we
are aware of. A \emph{Salem set} is a set whose Fourier dimension equals its Hausdorff dimension.

Salem \cite{salem} showed  				
that for each $s \in (0, 1)$ there is a compact Salem subset of $[0, 1]$
of dimension $s$ (although this was shown for the restriction of the
Fourier transform to the integers).
An \emph{explicit} example of a Salem set
of any prescribed dimension in $(0, 1)$ is given by the set of
\emph{$\alpha$-well approximable numbers}, namely the set
	$$
	E(\alpha) =
	\bigcap_{n = 1}^\infty \bigcup_{k = n}^\infty \left\{ x \in [0, 1]; \, \| kx \| \leq k^{-(1 + \alpha)} \right\},
	$$
where $\| \cdot \|$ denotes the distance to the nearest integer. By a theorem
of Jarn\'{\i}k \cite{jarnik} and Besicovitch \cite{besicovitch} the set $E(\alpha)$
has Hausdorff dimension $2 / (2 + \alpha)$ for $\alpha > 0$, and
Kaufman \cite{kaufman2} showed that there is a measure in $\probs(E(\alpha))$ with Fourier
dimension $2 / (2 + \alpha)$ (see also Bluhm's paper \cite{bluhm1}).

It was shown by Kaufman \cite{kaufman1} that for any $C^2$-curve $\Gamma$ in $\rea^2$ with
positive curvature and any $s \subset (0, 1)$, there is a compact Salem
set $S \subset \Gamma$ of dimension $s$. From this it can be deduced \cite[Proposition 1.1]{FOS}
that for any $s \in [0, 1]$ there is a continuous function $[0, 1] \to \rea$
whose graph has Fourier dimension $s$. Fraser, Orponen and Sahlsten \cite{FOS}
proved that the graph of any function $[0, 1] \to \rea$ has \emph{compact} Fourier dimension 
(defined below) less than or equal to $1$, and that the set of continuous functions
$[0, 1] \to \rea$ whose graphs have Fourier dimension $0$ is residual with respect to the supremum norm among
all continuous functions $[0, 1] \to \rea$.

Kahane showed that images of compact sets under Brownian motion and fractional Brownian motion 
are almost surely Salem sets, see \cite{kahane2}. It was shown by Fouch\'e and Mukeru \cite{FM}
that the level sets of fractional Brownian motion are almost surely Salem sets (in the special
case of Brownian motion this follows from a result of Kahane, see \cite[Section 3.2]{FM}).

Jordan and Sahlsten \cite{JS} showed that Gibbs measures of Hausdorff dimension greater than
$1 / 2$ (satisfying a certain condition on the Gibbs potential) for the Gauss map
$x \mapsto 1 / x \pmod{1}$ have positive Fourier dimension.

Wolff's book \cite{wolff} about harmonic analysis discusses some applications of the
Fourier transform to problems in geometric measure theory.

\subsection{Some remarks}
It is not so difficult to see that the Fourier dimension for measures is
invariant under translations and invertible linear transformations, and
thus the Fourier dimension and modified Fourier dimension for sets are
invariant as well.

For any finite Borel measure $\mu$ on $\rea^d$,
	$$
	\lim_{T \to \infty} \frac{1}{(2T)^d} \int_{[-T, T]^d}
	\left| \widehat\mu(\xi) \right|^2 \intd{\xi}
	=
	\sum_{x \in \rea^d} \mu(\{ x \})^2
	$$
(this is a variant of Wiener's lemma). If $\mu$ has an atom it is thus
not possible that $\lim_{|\xi| \to \infty} \widehat\mu(\xi) = 0$,
so $\dimf \mu = 0$, and also $\dimfm \mu = 0$ since
$\nu$ has an atom whenever $\mu \ll \nu$. It follows that $\dimf A = \dimfm A = 0$
for any countable set $A \subset \rea^d$.

Suppose next that $A$ is a countable union of $k$-dimensional hyperplanes
in $\rea^d$ with $k < d$. If $\mu$ gives positive measure to $A$, then there
must be a hyperplane $P$ such that $\mu(P) > 0$.
But then the projection of $\mu$ onto any line $L$ that goes through the origin and is
orthogonal to $P$ has an atom, so $\widehat\mu$ does not decay along $L$.
This shows that $\dimf A = \dimfm A = 0$. Thus for example a line segment in $\rea^2$
has Fourier dimension $0$ even though an interval in \rea has Fourier dimension $1$.

From a special case of a theorem by Davenport, Erd\H{o}s and LeVeque \cite{DEL}, it can
be derived \cite[Corollary~7.4]{QR} that if $\mu$ is a
probability measure on \rea such that $\widehat\mu(\xi) \lesssim |\xi|^{-\alpha}$
for some $\alpha > 0$, then $\mu$-a.e.~ $x$ is normal to any base (meaning that
$(b^k x)_{k = 0}^\infty$ is uniformly distributed mod $1$ for any $b \in \{ 2, 3, \ldots\}$).
Thus if $A \subset \rea$ does not contain any number that is normal to all
bases, then $\dimf A = \dimfm A = 0$. In particular this applies to
the middle-third Cantor set, since it consists of numbers that do not have
any $1$ in their ternary decimal expansion and hence are not normal to
base $3$.

\subsection{Other variants of the Fourier dimension}
One alternative way of defining the Fourier dimension of a Borel set
$A \subset \rea^d$ is to require the measure in the definition to
give full measure to a compact subset of $A$, rather than to $A$ itself.
This variant, which will here be called the \emph{compact Fourier dimension}, is thus defined by
	$$
	\dimfc A = 	\sup \left\{ s \in [0, d]; \, \widehat\mu(\xi) \lesssim |\xi|^{-s/2}, \,
	\mu \in \probs(K), \, K \subset A \text{ is compact} \right\}.
	$$
The anonymous referee of this paper provided an argument showing
that the compact Fourier dimension is countably stable whenever all
the sets in the union are closed (see Proposition~\ref{dimfcprop} below),
and pointed out that this can be used to deduce that the Fourier dimension
and the compact Fourier dimension are not the same. Inspired by this,
we then found an example that shows that the compact Fourier dimension
is not in general finitely stable.

The Hausdorff dimension is \emph{inner regular} in the
sense that
	$$
	\dimh A = \sup_{\substack{K \subset A \\ K \text{ compact}}} \dimh K
	$$
for any Borel set $A \subset \rea^d$ (this follows from \cite[Theorem 48]{rogers}),
and the same is true of the modified Fourier dimension by inner regularity of
finite Borel measures on $\rea^d$. Another way of expressing the fact that
\dimfc is different from \dimf is to say that \dimf is not inner regular.

One might consider to define the Fourier dimension and the modified Fourier
dimension of any $B \subset \rea^d$, by taking the supremum over all measures
in $\probs(\rea^d)$ that give full or positive measure to some Borel set $A \subset B$,
but then \dimf and \dimfm are not even finitely stable. For there is a construction
by Bernstein (using the well ordering theorem for sets with cardinality $\mathfrak{c}$)
of a set $B \subset \rea$ such that any closed subset of $B$ or $B^c$ is countable
\cite[Theorem~5.3]{oxtoby}. Thus any non-atomic measure $\mu \in \probs(\rea)$ gives measure
$0$ to any compact subset of $B$ or $B^c$, and by inner regularity to
any Borel subset of $B$ or $B^c$. It follows that $B$ and $B^c$ have Fourier
dimension and modified Fourier dimension $0$, but $B \cup B^c = \rea$ has dimension
$1$.

This can be modified slightly to produce Lebesgue measurable sets
$C_1, C_2 \subset \rea$ that would violate the finite stability. For
each natural number $n$, let $A_n$ be a Salem set of dimension $1 - 1 / n$ and
let
	$$
	C_1 = B \cap \bigcup_{n = 1}^\infty A_n, \qquad
	C_2 = B^c \cap \bigcup_{n = 1}^\infty A_n.
	$$
Then $C_1$ and $C_2$ are Lebesgue measurable since each $A_n$ has Lebesgue measure $0$,
and since they are subsets of $B$ and $B^c$ respectively they would have
Fourier dimension and modified Fourier dimension $0$. On the other hand,
	$$
	\dimf (C_1 \cup C_2) = \dimf\left(\bigcup_{n = 1}^\infty A_n\right) = 1,
	$$
and thus also $\dimfm(C_1 \cup C_2) = 1$.

\section{Stability of the Fourier dimension for sets}	\label{sec2}
In this section it is shown that the Fourier dimension is stable under
finite or countable unions of sets that satisfy a certain intersection condition,
and that the compact Fourier dimension is stable under finite or countable unions
of closed sets. Then examples are given, showing that the Fourier dimension is not
countably stable, that the compact Fourier dimension is different from the
Fourier dimension and that the compact Fourier dimension is not
finitely stable.

The following lemma is used in the proofs of Theorem~\ref{sepstabthm} and
Proposition~\ref{dimfcprop}, and also in Section~\ref{sec3} and Section~\ref{sec5}.

\begin{lemma}	\label{smoothcutlemma}
Let $\mu$ be a finite Borel measure on $\rea^d$ and let $f$ be a non-negative
$C^m$-function with compact support, where $m = \ceil{3d / 2}$. Define the
measure $\nu$ on $\rea^d$ by $\intd{\nu} = f \intd{\mu}$. Then
	$$
	\widehat\mu(\xi) \lesssim |\xi|^{-s/2}
	\implies
	\widehat\nu(\xi) \lesssim |\xi|^{-s/2}
	$$
for all $s \in [0, d]$, and in particular
	$$
	\dimf \nu \geq \dimf \mu.
	$$

\begin{proof}
Since $f$ is of class $C^m$ and has compact support, there is a constant $M$
such that
	$$
	|\widehat f (t)| \leq \frac{M}{1 + |t|^m}
	$$
for all $t \in \rea^d$. In particular $\widehat f$ is Lebesgue integrable,
and from this it also follows that the Fourier inversion formula holds
pointwise everywhere for $f$. Thus
	\begin{align*}
	\widehat \nu(\xi) &=
	\int e^{-2\pi i \xi \cdot x} f(x) \intd{\mu}(x) =
	\int e^{-2\pi i \xi \cdot x}
	\left( \int e^{2 \pi i t \cdot x} \widehat f(t) \intd{t} \right)\intd{\mu}(x) \\
	&=
	\int \left( \int e^{-2\pi i (\xi - t) \cdot x} \intd{\mu}(x) \right) \widehat f (t) \intd{t} =
	\int \widehat \mu (\xi - t) \widehat f(t) \intd{t}.
	\end{align*}
Now,
	\begin{align*}
	&\int_{\{|\xi - t| < |\xi| / 2\}} \left|\widehat \mu (\xi - t) \, \widehat f(t) \right| \intd{t}
	\leq
	\int_{\{ |t| \geq |\xi| / 2\}} \frac{\mu(\rea^d) \, M}{1 + |t|^m} \intd{t}
	\leq \\
	&\mu(\rea^d) \, M \int_{\{ |t| \geq |\xi| / 2\}} |t|^{-m} \intd{t}
	= \text{const.} \cdot {|\xi|}^{d - m} \lesssim |\xi|^{-d/2},
	\end{align*}
and if $\widehat \mu (\xi) \leq C |\xi|^{-s / 2}$ for all $\xi \in \rea^d$ then
	$$
	\int_{\{ |\xi - t | \geq |\xi| / 2 \}} 
	\left|\widehat \mu (\xi - t) \, \widehat f(t) \right| \intd{t}
	\leq
	\frac{C2^{s / 2}}{|\xi|^{s / 2}} \int |\widehat f (t)| \intd{t}
	\lesssim |\xi|^{-s / 2}.
	$$
Thus
	$$
	\widehat \nu(\xi) \lesssim |\xi|^{-d/2} + |\xi|^{-s / 2}
	$$
whenever $\widehat\mu(\xi) \lesssim |\xi|^{-s / 2}$, which proves the lemma.
\end{proof}
\end{lemma}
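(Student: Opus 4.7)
The plan is to express $\widehat\nu$ as a convolution $\widehat\mu * \widehat f$ and then split the convolution integral according to where the better decay estimate comes from. The preliminary ingredient is quantitative decay of $\widehat f$: since $f \in C^m$ has compact support, integration by parts $m$ times in each coordinate yields $|\widehat f(t)| \lesssim (1 + |t|)^{-m}$, so in particular $\widehat f \in L^1(\rea^d)$ and the Fourier inversion formula holds pointwise for $f$. Substituting $f(x) = \int e^{2\pi i t \cdot x} \widehat f(t) \intd t$ into the definition of $\widehat\nu(\xi)$ and swapping integrals (justified by absolute convergence from $\widehat f \in L^1$ and $\mu$ finite) gives $\widehat\nu(\xi) = \int \widehat\mu(\xi - t) \widehat f(t) \intd t$.

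Next I would split the integration domain at the threshold $|\xi - t| \gtrless |\xi|/2$. On the "far" region $\{|\xi - t| \geq |\xi|/2\}$, the hypothesis $|\widehat\mu(\eta)| \leq C|\eta|^{-s/2}$ yields $|\widehat\mu(\xi - t)| \leq C \cdot 2^{s/2} |\xi|^{-s/2}$ uniformly in $t$, so this contribution is bounded by $|\xi|^{-s/2} \cdot \|\widehat f\|_1 \lesssim |\xi|^{-s/2}$. On the complementary region $\{|\xi - t| < |\xi|/2\}$ we have $|t| > |\xi|/2$, so I would use only the trivial bound $|\widehat\mu(\xi - t)| \leq \mu(\rea^d)$ together with the decay $|\widehat f(t)| \lesssim |t|^{-m}$; integrating $|t|^{-m}$ over $|t| > |\xi|/2$ produces a bound of order $|\xi|^{d - m}$.

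Adding the two contributions gives $\widehat\nu(\xi) \lesssim |\xi|^{d-m} + |\xi|^{-s/2}$. For this to imply $\widehat\nu(\xi) \lesssim |\xi|^{-s/2}$ for every $s \in [0, d]$, the first term must dominate the worst case $s = d$, which forces $d - m \leq -d/2$, i.e.\ $m \geq 3d/2$; this explains the choice $m = \ceil{3d/2}$. There is no serious obstacle here --- the approach is the standard one for showing that smooth cutoffs preserve Fourier decay rates --- the only mild subtlety is producing the decay of $\widehat f$ with a fixed exponent large enough to handle the entire range $s \in [0, d]$ uniformly, which is exactly what the regularity assumption $m = \ceil{3d/2}$ buys. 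The final statement $\dimf \nu \geq \dimf \mu$ is then immediate from the definition of $\dimf$ as a supremum.
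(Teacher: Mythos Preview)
Your proposal is correct and follows essentially the same argument as the paper: express $\widehat\nu = \widehat\mu * \widehat f$ via Fourier inversion, split the convolution integral at $|\xi - t| \gtrless |\xi|/2$, and estimate each piece exactly as you describe. The paper's write-up is nearly identical, including the observation that the near-origin piece contributes $|\xi|^{d-m} \lesssim |\xi|^{-d/2}$ by the choice $m = \lceil 3d/2 \rceil$.
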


\begin{theorem}	\label{sepstabthm}
Let $\{ A_k \}$ be a finite or countable family of Borel subsets of\/ $\rea^d$
such that 
	$$
	\sup_n \, \dimfm \left( A_n \cap \closure{\bigcup_{k \neq n} A_k} \right) <
	\dimf \left( \bigcup_k A_k \right).
	$$
Then
	$$
	\dimf \left( \bigcup_k A_k \right) = \sup_k \, \dimf A_k.
	$$

\begin{proof}
The inequality $\geq$ is immediate from the monotonicity of $\dimf$.
To see the other inequality, take an arbitrary $\mu \in \probs(\bigcup_k A_k)$
such that
	$$
	\sup_n \, \dimfm \left( A_n \cap \closure{\bigcup_{k \neq n} A_k} \right)
	< \dimf \mu
	$$
and let $n$ be such that $\mu(A_n) > 0$. Then by the definition of the modified
Fourier dimension,
	$$
	\mu \left( A_n \cap \closure{\bigcup_{k \neq n} A_k} \right) = 0
	\quad
	\text{and thus}
	\quad
	\mu\left( A_n \setminus \closure{\bigcup_{k \neq n} A_k} \right) > 0.
	$$

Let $f$ be a non-negative $C^{\infty}$-function that is $0$ on $\closure{\bigcup_{k \neq n} A_k}$
and positive everywhere else. Then $\mu(f) > 0$, and there is a non-negative
$C^\infty$-function $g$ with compact support such that $\mu(fg) > 0$ as well. The
measure $\nu$ defined by
	$$
	\intd{\nu} = \frac{fg}{\mu(fg)} \intd{\mu}
	$$
is a probability measure on $A_n$, so
	$$
	\sup_k \, \dimf A_k \geq \dimf A_n \geq \dimf \nu \geq \dimf \mu,
	$$
where the last inequality is by Lemma~\ref{smoothcutlemma}. Taking supremum
on the right over all $\mu \in \probs(\bigcup_k A_k)$ gives the inequality
$\leq$ in the statement.
\end{proof}
\end{theorem}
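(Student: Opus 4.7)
The plan is to prove the nontrivial inequality $\leq$, since $\geq$ is immediate from monotonicity of $\dimf$. I would fix an arbitrary $\mu \in \probs(\bigcup_k A_k)$ whose Fourier dimension is strictly greater than $\sup_n \dimfm(A_n \cap \closure{\bigcup_{k \neq n} A_k})$, and aim to exhibit an index $n$ and a probability measure $\nu$ supported on $A_n$ with $\dimf \nu \geq \dimf \mu$; taking the supremum over such $\mu$ then yields $\dimf(\bigcup_k A_k) \leq \sup_k \dimf A_k$.

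The first key observation is that the hypothesis forces $\mu$ to annihilate every boundary intersection. Unpacking the definition of $\dimfm$, any measure whose Fourier dimension exceeds $\dimfm(E)$ must assign $E$ measure zero, since otherwise that very measure would witness a larger value of $\dimfm(E)$. Applying this with $E = A_n \cap \closure{\bigcup_{k \neq n} A_k}$ gives
	$$
	\mu\left(A_n \cap \closure{\bigcup_{k \neq n} A_k}\right) = 0
	\quad \text{for every } n.
	$$
Because $\bigcup_k A_k$ has full $\mu$-measure and is a countable union, some $A_n$ satisfies $\mu(A_n) > 0$, and hence $\mu$ gives positive mass to the difference $A_n \setminus \closure{\bigcup_{k \neq n} A_k}$, which is contained in the open set $U := \rea^d \setminus \closure{\bigcup_{k \neq n} A_k}$.

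With this separation in hand, I would construct $\nu$ by smooth truncation. Inner regularity of $\mu$ provides a compact $K \subset U$ with $\mu(K) > 0$, and a standard mollifier argument supplies a nonnegative $C^\infty$ function $f$ of compact support contained in $U$ with $f > 0$ on $K$, so that $\int f \intd{\mu} > 0$. The normalisation $\intd{\nu} = f \intd{\mu} / \int f \intd{\mu}$ is then a probability measure supported in $A_n$, and Lemma~\ref{smoothcutlemma} applies directly to yield $\dimf \nu \geq \dimf \mu$, closing the argument.

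The main subtlety is conceptual rather than technical: the modified rather than the usual Fourier dimension appears in the hypothesis precisely because the null-set step requires control over \emph{every} measure that charges $A_n \cap \closure{\bigcup_{k \neq n} A_k}$, including $\mu$ itself, whereas $\dimf$ only constrains probability measures actually supported on that set. Once this null-set step is in place, the smooth-cutoff construction together with Lemma~\ref{smoothcutlemma} renders the remainder of the proof essentially mechanical.
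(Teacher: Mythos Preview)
Your proposal is correct and follows essentially the same approach as the paper: use the $\dimfm$ hypothesis to conclude $\mu(A_n \cap \closure{\bigcup_{k\neq n}A_k})=0$, then smoothly truncate $\mu$ away from $\closure{\bigcup_{k\neq n}A_k}$ and invoke Lemma~\ref{smoothcutlemma}. The only cosmetic difference is that the paper builds the cutoff as a product $fg$ (with $f$ vanishing exactly on the closure and $g$ compactly supported), whereas you obtain a single compactly supported bump via inner regularity and a mollifier; both constructions serve the identical purpose.
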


\begin{corollary}	\label{thefirstcor}
Let $\{ A_k \}$ be a finite or countable family of Borel subsets of\/ $\rea^d$
such that 
	$$
	\sup_n \, \dimfm \left( A_n \cap \closure{\bigcup_{k \neq n} A_k} \right)
	\leq
	\sup_k \, \dimf A_k.
	$$
Then
	$$
	\dimf \left( \bigcup_k A_k \right) = \sup_k \, \dimf A_k.
	$$

\begin{proof}
If the conclusion does not hold then neither does the assumption, since then
	$$
	\sup_k \, \dimf A_k < \dimf \left( \bigcup_k A_k \right) \leq 
	\sup_n \, \dimfm \left( A_n \cap \closure{\bigcup_{k \neq n} A_k} \right),
	$$
where the second inequality is by Theorem~\ref{sepstabthm}.
\end{proof}
\end{corollary}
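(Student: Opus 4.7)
The plan is to reduce the corollary to Theorem~\ref{sepstabthm} by a short contrapositive argument, since the only difference between the two statements is that the strict inequality in the theorem's hypothesis has been relaxed to $\leq$ in the corollary. Monotonicity of $\dimf$ already yields the inequality $\dimf\left(\bigcup_k A_k\right) \geq \sup_k \dimf A_k$, so the content is entirely in the reverse direction.

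I would argue by contradiction. Suppose the conclusion fails, so that
$$\sup_k \dimf A_k < \dimf\left(\bigcup_k A_k\right).$$
Combined with the hypothesis of the corollary, transitivity yields
$$\sup_n \, \dimfm \left(A_n \cap \closure{\bigcup_{k \neq n} A_k}\right) \leq \sup_k \dimf A_k < \dimf\left(\bigcup_k A_k\right),$$
so in particular the leftmost quantity is strictly less than the rightmost one. This is exactly the hypothesis required to invoke Theorem~\ref{sepstabthm}, which therefore produces the identity $\dimf\left(\bigcup_k A_k\right) = \sup_k \dimf A_k$ --- contradicting our standing assumption that the two sides were unequal.

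There is essentially no obstacle here. The corollary is best viewed as Theorem~\ref{sepstabthm} rephrased in contrapositive form: the theorem concludes equality from a strict gap below, while the corollary packages the same content as an implication from a non-strict bound. The only point to double-check is that the chain of inequalities above really does produce a \emph{strict} inequality feeding into Theorem~\ref{sepstabthm}, which is immediate from the fact that the $<$ introduced by the contradiction hypothesis sits on the right-hand end of the chain.
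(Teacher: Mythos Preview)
Your argument is correct and is essentially the same as the paper's: both reduce to Theorem~\ref{sepstabthm} by noting that a failure of the conclusion forces the strict inequality needed in the theorem's hypothesis. The only cosmetic difference is that the paper phrases it as a pure contrapositive (showing the assumption fails), while you assume the hypothesis and derive a contradiction; the underlying logic is identical.
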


\begin{corollary}
Let $\{ A_k \}$ be a finite or countable family of Borel subsets of\/ $\rea^d$
such that 
	$$
	A_n \cap \closure{\bigcup_{k \neq n} A_k}
	$$
is countable for all $n$.
Then
	$$
	\dimf \left( \bigcup_k A_k \right) = \sup_k \, \dimf A_k.
	$$

\begin{proof}
This follows from Corollary~\ref{thefirstcor} since any countable set has modified
Fourier dimension $0$.
\end{proof}
\end{corollary}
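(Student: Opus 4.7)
The strategy is to apply Corollary~\ref{thefirstcor} directly. Its hypothesis requires $\sup_n \dimfm\bigl(A_n \cap \closure{\bigcup_{k\neq n} A_k}\bigr) \leq \sup_k \dimf A_k$, and since the right-hand side is always nonnegative, it suffices to show that each intersection has modified Fourier dimension zero. Given the countability assumption on those intersections, I therefore only need the single auxiliary fact that any countable subset of $\rea^d$ has modified Fourier dimension zero.

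To verify that fact, let $C \subset \rea^d$ be countable and let $\mu \in \probs(C)$. Since $\mu(C)=1$ and $C$ is countable, $\mu$ must have an atom at some point $x_0 \in C$. If $\nu \in \probs(\rea^d)$ satisfies $\mu \ll \nu$, then $\nu(\{x_0\}) > 0$: otherwise $\{x_0\}$ would be $\nu$-null and hence $\mu$-null, contradicting $\mu(\{x_0\})>0$. So $\nu$ has an atom, and by the variant of Wiener's lemma recalled in the introduction, $|\widehat\nu(\xi)|$ does not tend to zero as $|\xi| \to \infty$. Consequently $\dimf \nu = 0$. Taking the supremum first over admissible $\nu$ gives $\dimfm \mu = 0$, and then over $\mu \in \probs(C)$ gives $\dimfm C = 0$.

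Combining these two observations yields the conclusion: under the hypothesis of the corollary each set $A_n \cap \closure{\bigcup_{k\neq n} A_k}$ is countable, hence has modified Fourier dimension $0$, hence satisfies the hypothesis of Corollary~\ref{thefirstcor}. There is no genuine obstacle here --- the proof is essentially a one-line deduction once Corollary~\ref{thefirstcor} and the atom/Wiener observation from the introductory remarks are in hand.
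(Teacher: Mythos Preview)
Your proof is correct and follows the same route as the paper: apply Corollary~\ref{thefirstcor} using the fact that countable sets have modified Fourier dimension $0$. You additionally spell out the atom/Wiener argument for that fact, which the paper had already recorded in the introductory remarks; this is fine and your version is accurate.
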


The proof of the following proposition was provided by the anonymous
referee, who also noted that the statement can be applied to the sets
constructed in Example~\ref{setex} below to show that the compact Fourier
dimension is different from the Fourier dimension. Another application is
Example~\ref{setexc} below, which shows that the compact Fourier dimension is not
finitely stable.

\begin{proposition}	\label{dimfcprop}
Let $\{ A_k \}$ be a finite or countable family of closed subsets of\/ $\rea^d$.
Then
	$$
	\dimfc \left( \bigcup_k A_k \right) = \sup_k \, \dimfc A_k.
	$$

\begin{proof}
The inequality $\geq$ holds since \dimfc is monotone. To see the other
inequality, let $\mu$ be any Borel probability measure whose topological support
$K$ is a subset of $\bigcup_k A_k$. By Baire's category theorem the
complete metric space $K$ cannot be expressed as a countable union of 
nowhere dense sets, and since
	$$
	K = \bigcup_k K \cap A_k
	$$
it follows that there must be some $n$ such that $K \cap A_n = \closure{K \cap A_n}$
has non-empty interior in $K$. Thus there is an open subset $V$ of $\rea^d$ such that
	$$
	\emptyset \neq K \cap V \subset K \cap A_n.
	$$
Let $f$ be a non-negative $C^\infty$-function that is $0$ outside
of $V$ and satisfies $\mu(f) > 0$, and define the probability measure $\nu$ by
	$$
	\intd\nu = \frac{f}{\mu(f)} \intd\mu.
	$$
Then
	$$
	\dimf \mu \leq \dimf \nu \leq \dimfc A_n \leq \sup_k \, \dimfc A_k,
	$$
where the first inequality is by Lemma~\ref{smoothcutlemma} and second inequality
holds since $\nu$ gives full measure to $A_n$. Taking the supremum over all probability
measures $\mu$ such that $\supp \mu \subset \bigcup_k A_k$ completes the proof.
\end{proof}
\end{proposition}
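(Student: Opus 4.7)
The plan is to prove the non-trivial inequality $\dimfc(\bigcup_k A_k) \leq \sup_k \dimfc A_k$; the reverse is immediate from monotonicity. I would start from an arbitrary Borel probability measure $\mu$ with compact topological support $K \subset \bigcup_k A_k$ and try to produce a new probability measure, compactly supported inside a single $A_n$, whose Fourier dimension is at least $\dimf \mu$. Once this is done, taking the supremum over all admissible $\mu$ gives the result.

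The decisive step is a Baire category argument on the compact (hence complete) metric space $K$. Since each $A_k$ is closed, each $K \cap A_k$ is relatively closed in $K$, and the countably many sets $K \cap A_k$ cover $K$. Baire's theorem therefore forces at least one $K \cap A_n$ to have non-empty interior relative to $K$; equivalently, there is an open set $V \subset \rea^d$ with $\emptyset \neq K \cap V \subset A_n$. This is the one place where the closedness hypothesis on the $A_k$ is essential.

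Having localised topologically, I would smoothly localise the measure by means of Lemma~\ref{smoothcutlemma}. Pick a non-negative $C^\infty$-function $f$ supported in $V$ with $\mu(f) > 0$ (a bump at any interior point of $K \cap V$ will do) and set $\intd\nu = f \intd{\mu} / \mu(f)$. Since $\mu(K^c) = 0$ and $V \cap K \subset A_n$, a short check shows $\nu(A_n^c) = 0$, and because $\supp \nu \subset K$ is compact while $A_n$ is closed, the set $\supp \nu \cap A_n$ is a compact subset of $A_n$ carrying all of $\nu$. Lemma~\ref{smoothcutlemma} now yields $\dimf \nu \geq \dimf \mu$, so $\dimfc A_n \geq \dimf \nu \geq \dimf \mu$, and passing to the supremum completes the proof.

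The main obstacle is the Baire step: without closedness of the $A_k$ there is no reason for $K \cap A_n$ to have interior in $K$, and the argument collapses. This is consistent with the fact, established earlier in the paper, that $\dimfc$ is \emph{not} in general finitely stable without such a hypothesis. The rest — choosing the bump and invoking the smoothing lemma — is routine once $V$ has been produced.
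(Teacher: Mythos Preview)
Your proposal is correct and follows essentially the same route as the paper's proof: apply Baire's theorem to the compact support $K$ to find some $K\cap A_n$ with non-empty relative interior, then use Lemma~\ref{smoothcutlemma} with a compactly supported bump $f$ in the corresponding open set $V$ to pass from $\mu$ to a measure $\nu$ concentrated on $A_n$ without lowering the Fourier dimension. Your write-up is slightly more explicit than the paper's in checking that $\nu$ is carried by a compact subset of $A_n$, but the argument is the same.
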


Example~\ref{setex} below shows that the Fourier dimension is not countably stable, and also that
the strict inequality in the assumption of Theorem~\ref{sepstabthm}
cannot be changed to a non-strict inequality. The following lemma is used in
Example~\ref{setex} and Example~\ref{setexc}.

\begin{lemma}	\label{effinfsuplemma}
For any $\varepsilon \in (0, 1]$,
	$$
	\inf_{\phantom{!}\mu\phantom{!}} \sup_{j \geq 1} \left| \widehat \mu(j) \right|
	\geq \frac{\pi\varepsilon}{8 + 2\pi\varepsilon}
	\qquad\left( \, \geq \frac{\varepsilon}{5} \, \right),
	$$
where the infimum is over all $\mu \in \probs([\varepsilon, 1])$ and
the supremum is over all positive integers $j$.

\begin{proof}
Fix $\varepsilon > 0$ and take any $\mu \in \probs([\varepsilon, 1])$. If $\varphi$
is a real-valued continuous function supported on $[0, \varepsilon]$ such
that
	$$
	\int \varphi(x) \intd{x} = 1 \qquad \text{ and } \qquad
	\sum_{k = -\infty}^\infty |\widehat\varphi(k)| < \infty
	$$
%
then
	$$
	0 = \mu(\varphi) = \sum_{k = -\infty}^\infty \widehat\varphi(k) \overline{\widehat\mu(k)}
	= 1 + 2 \re\left( \sum_{k = 1}^\infty \widehat\varphi(k) \overline{\widehat\mu(k)} \right),
	$$
and thus
	$$
	\frac{1}{2} \leq \sum_{k = 1}^\infty |\widehat\varphi(k)| |\widehat\mu(k)|
	\leq \left(\sum_{k = 1}^\infty |\widehat\varphi(k)| \right) \left(\sup_{j \geq 1} |\widehat\mu(j)| \right).
	$$

Now let $\chi$ be the indicator function of $[0, \varepsilon / 2]$
and take $\varphi$ to be the triangle pulse
	$$
	\varphi(x) = \left( \frac{2\chi}{\varepsilon} \right) * 
	\left( \frac{2\chi}{\varepsilon} \right),
	$$
where $*$ denotes convolution. Then
	$$
	|\widehat\varphi(k)| =
	\left| \frac{2\widehat\chi(k)}{\varepsilon}\right|^2 =
	\sinc^2\left(\frac{k\pi\varepsilon}{2} \right) \leq
	\min\left( 1, \, \frac{4}{k^2 \pi^2 \varepsilon^2} \right),
	$$
so that
	$$
	\sum_{k = 1}^\infty |\widehat\varphi(k)| \leq
	\left\lceil \frac{2}{\pi\varepsilon} \right\rceil +
	\frac{4}{\pi^2 \varepsilon^2}
	\sum_{\ceil{\frac{2}{\pi\varepsilon}} + 1}^\infty \frac{1}{k^2}
	\leq
	\frac{2}{\pi\varepsilon} + 1 +
	\frac{4}{\pi^2 \varepsilon^2} 
	\int_{\frac{2}{\pi\varepsilon}}^\infty \frac{1}{x^2} \intd{x}
	= \frac{4 + \pi\varepsilon}{\pi\varepsilon}.
	$$
It follows that
	$$
	\sup_{j \geq 1} |\widehat\mu(j)| \geq \frac{1}{2} \cdot \frac{\pi\varepsilon}{4 + \pi\varepsilon}.
	$$
\end{proof}
\end{lemma}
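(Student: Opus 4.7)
The plan is to exploit a duality argument. Since $\mu$ is supported in $[\varepsilon,1]$, any continuous function $\varphi$ supported in $[0,\varepsilon]$ satisfies $\mu(\varphi) = 0$. I would therefore look for $\varphi$ which, after being viewed as a function on the unit circle (both $\mathrm{supp}(\mu)$ and $\mathrm{supp}(\varphi)$ sit inside $[0,1]$), has an absolutely convergent Fourier series. Expanding $\varphi = \sum_k \widehat\varphi(k)\, e^{2\pi i k \cdot}$ and integrating against $\mu$, the dominated convergence theorem permits interchange of sum and integral, yielding
$$
0 \;=\; \mu(\varphi) \;=\; \sum_{k\in\mathbf{Z}} \widehat\varphi(k)\,\overline{\widehat\mu(k)}.
$$
Normalising so that $\widehat\varphi(0) = \int \varphi = 1$ and taking $\varphi$ real (so that the $k$ and $-k$ terms are complex conjugates and $\widehat\mu(0)=1$), the $k=0$ term equals $1$, and splitting off positive and negative frequencies gives $1 + 2\mathrm{Re}\sum_{k\geq 1}\widehat\varphi(k)\overline{\widehat\mu(k)} = 0$. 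Hence
$$
\tfrac{1}{2} \;\leq\; \sum_{k\geq 1} |\widehat\varphi(k)|\,|\widehat\mu(k)|
\;\leq\; \Bigl(\sup_{j\geq 1}|\widehat\mu(j)|\Bigr)\sum_{k\geq 1}|\widehat\varphi(k)|,
$$
so the task reduces to exhibiting a single $\varphi$ with small $\ell^1$ norm of its Fourier coefficients.

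For this, I would take the triangle pulse $\varphi = (2\chi/\varepsilon)\ast(2\chi/\varepsilon)$, where $\chi$ is the indicator of $[0,\varepsilon/2]$. This is automatically supported in $[0,\varepsilon]$, real, non-negative, and has $\int \varphi = 1$; moreover $\widehat\varphi(k) = \sinc^2(k\pi\varepsilon/2)$, which is readily bounded by $\min(1,\,4/(k^2\pi^2\varepsilon^2))$. The sum $\sum_{k\geq 1}|\widehat\varphi(k)|$ is then estimated by splitting at $k_0 = \lceil 2/(\pi\varepsilon)\rceil$: the first $k_0$ terms are bounded by $1$ each, and for $k>k_0$ the decay bound is summed by comparison with $\int_{2/(\pi\varepsilon)}^\infty x^{-2}\,\mathrm{d}x$. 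This yields the clean estimate $\sum_{k\geq 1}|\widehat\varphi(k)| \leq (4+\pi\varepsilon)/(\pi\varepsilon)$, and substituting back gives precisely $\sup_j|\widehat\mu(j)| \geq \pi\varepsilon/(8+2\pi\varepsilon)$. The final inequality $\geq \varepsilon/5$ for $\varepsilon\in(0,1]$ is a one-line check using $\pi \leq 4$ in the denominator.

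The conceptual heart is the duality step, not the estimate on the triangle pulse. The only real subtlety I expect is justifying the interchange of summation and integration against $\mu$ — which is why I insisted from the start that $\sum_k |\widehat\varphi(k)| < \infty$, allowing use of uniform convergence of the Fourier series of $\varphi$. A triangle pulse is comfortably regular enough (it is Lipschitz, in fact piecewise $C^\infty$), so no technical issues arise. The rest is bookkeeping with geometric-type tails.
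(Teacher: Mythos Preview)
Your proposal is correct and follows essentially the same argument as the paper: the duality step via a test function $\varphi$ supported on $[0,\varepsilon]$, the same triangle-pulse choice $\varphi=(2\chi/\varepsilon)*(2\chi/\varepsilon)$, and the identical split of $\sum_{k\geq 1}|\widehat\varphi(k)|$ at $\lceil 2/(\pi\varepsilon)\rceil$ leading to the bound $(4+\pi\varepsilon)/(\pi\varepsilon)$. The only cosmetic difference is that you comment explicitly on the justification of the interchange and on the final $\varepsilon/5$ check, neither of which the paper spells out.
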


\begin{example}	\label{setex}
Let $(l_k)_{k = 1}^\infty$ be a strictly increasing sequence of natural numbers such that
	$$
	\lim_{k \to \infty} \frac{l_k}{k} = \infty.
	$$
Define the compact sets
	$$
	A_k = \closure{\{ x \in [0, 1]; \, x_{l_k + 1} \ldots x_{l_k + k} \neq 0^k \}},
	$$
where $x = 0.x_1x_2\ldots$ is the binary decimal expansion of $x$, and let
	$$
	B_n = \bigcap_{k = n}^\infty A_k.
	$$

Take any measure $\mu \in \probs(B_n)$ and let $\mu_k$ be the image
of $\mu$ under the map $x \mapsto 2^{l_k} x \pmod{1}$. If $k \geq n$ then
$\mu_k$ gives full measure to $[2^{-k}, 1]$, so by Lemma \ref{effinfsuplemma} there is
some $j_k \geq 1$ such that
	$$
	\widehat\mu(2^{l_k} j_k) = \widehat\mu_k(j_k) \geq \frac{2^{-k}}{5}.
	$$
Thus for any $s > 0$,
	$$
	\limsup_{\xi \to \infty} |\widehat\mu(\xi)| |\xi|^{s / 2} \geq
	\lim_{k \to \infty} |\widehat\mu(2^{l_k} j_k)| \left( 2^{l_k} j_k \right)^{s / 2} \geq
	\lim_{k \to \infty} \frac{2^{s l_k / 2 - k}}{5} = \infty.
	$$
It follows that $\dimf(B_n) = 0$ for all $n$.

Let $\lambda$ be Lebesgue measure on $[0, 1]$. Then for each $n$
	$$
	\lambda\left(\bigcup_{n = 1}^\infty B_n \right) \geq 
	\lambda\left( B_n \right) \geq 1 - \sum_{k = n}^\infty 2^{-k} = 1 - 2^{-(n - 1)},
	$$
so that $\lambda\left(\bigcup_{n = 1}^\infty B_n \right) = 1$ and hence
$\dimf\left(\bigcup_{n = 1}^\infty B_n \right) = 1$. This shows that
the Fourier dimension is not countably stable, and that \dimfc is
different from \dimf since $\dimfc\left( \bigcup_{n = 1}^\infty B_n \right) = 0$
by Proposition~\ref{dimfcprop}.
\end{example}

For $d \geq 2$, the sets $\left\{ B_n \times [0, 1]^{d - 1}\right\}$ give a counterexample
to countable stability of the Fourier dimension in $\rea^d$. 
Moreover, these sets do not satisfy the conclusion of Theorem~\ref{sepstabthm}, but they
would satisfy the assumption if the strict inequality was replaced by a non-strict inequality.
Thus it is not possible to weaken the assumption of Theorem~\ref{sepstabthm} in that way.

The following variation of Example \ref{setex} shows that the compact Fourier dimension
is not finitely stable.

\begin{example}	\label{setexc}
Let $s \in (\sqrt{3} - 1, 1)$ and choose $b$ such that
	$$
	\frac{1 - s}{s} < b < \frac{s}{2}
	$$
(this is possible since $s > \sqrt{3} - 1$). Let $(l_k)_{k = 1}^\infty$ be
a strictly increasing sequence of natural numbers and let $m_k = \ceil{bl_k}$.
Define the compact sets
	$$
	A_k = \closure{\{ x \in [0, 1]; \, x_{l_k + 1} \ldots x_{l_k + m_k} \neq 0^{m_k} \}},
	$$
where $x = 0.x_1x_2\ldots$ is the binary decimal expansion of $x$, and let
	$$
	B_n = \bigcap_{k = n}^\infty A_k.
	$$

Take any measure $\mu \in \probs(B_n)$ and let $\mu_k$ be the image
of $\mu$ under the map $x \mapsto 2^{l_k} x \pmod{1}$. If $k \geq n$ then
$\mu_k$ gives full measure to $[2^{-m_k}, 1]$, so by Lemma \ref{effinfsuplemma} there is some
$j_k \geq 1$ such that
	$$
	\widehat\mu(2^{l_k} j_k) = \widehat\mu_k(j_k) \geq \frac{2^{-m_k}}{5}.
	$$
Thus
	$$
	\limsup_{\xi \to \infty} |\widehat\mu(\xi)| |\xi|^{s / 2} \geq
	\lim_{k \to \infty} |\widehat\mu(2^{l_k} j_k)| \left( 2^{l_k} j_k \right)^{s / 2} \geq
	\lim_{k \to \infty} \frac{2^{sl_k / 2 - m_k}}{5} = \infty,
	$$
and it follows that $\dimf(B_n) \leq s$ for all $n$. Consequently
$\dimfc \left(\bigcup_{n = 1}^\infty B_n\right) \leq s$ by Proposition~\ref{dimfcprop}.

Next consider the complement $C$ of $\bigcup_{n = 1}^\infty B_n$ in $[0, 1]$, that is,
	$$
	C = [0, 1] \setminus \bigcup_{n = 1}^\infty B_n =
	\bigcap_{n = 1}^\infty \bigcup_{k = n}^\infty \left([0, 1] \setminus A_k\right).
	$$
For each $k$ the set $[0, 1] \setminus A_k$ is a union of $2^{l_k}$ intervals of length
$2^{-(l_k + m_k)}$, and thus
	$$
	\mathcal{H}_\delta^s ([0, 1] \setminus A_k) \leq 2^{l_k} \cdot 2^{-s(l_k + m_k)},
	$$
whenever $k$ is so large that $l_k + m_k \geq -\log_2 \delta$. Thus for all large enough
$n$,
	$$
	\mathcal{H}_\delta^s(C) \leq \mathcal{H}_\delta^s\left(
	\bigcup_{k = n}^\infty \left( [0, 1] \setminus A_k \right) \right)
	\leq \sum_{k = n}^\infty 2^{(1 - s)l_k -sm_k}.
	$$
The sum converges by the choice of $(m_k)$, and since $n$ can be taken arbitrarily large
it follows that
	$$
	\mathcal{H}_\delta^s(C) = 0.
	$$
for all $\delta > 0$. Thus
	$$
	\dimfc C \leq \dimh C \leq s,
	$$
so that $[0, 1]$ is the union of two sets with compact Fourier dimension
strictly less than $1$.
\end{example}

\section{Stability of the Fourier dimension for measures}	\label{sec3}
As mentioned in the introduction, finite stability of the Fourier dimension
for sets would follow if it could be shown that
	\begin{equation} \label{measstabeq}
	\dimf (\mu + \nu) = \min(\dimf \mu, \, \dimf \nu)
	\end{equation}
for all finite Borel measures $\mu$ and $\nu$. The inequality
$\geq$ always holds, but Example~\ref{measex} below shows that
strict inequality is possible. The following lemma is used in
that example.

\begin{lemma}	\label{oscintlemma}
Let $\alpha$ and $\beta$ be two distinct real numbers. Then
	$$
	\left|
	\int_0^1 e^{2\pi i \alpha x} \sin(2 \pi \beta x) \intd{x}
	\right|
	\leq
	\frac{1}{\big| |\alpha| - |\beta| \big|}.
	$$

\begin{proof}
For any $\gamma > 0$,
	\begin{align*}
	\left|
	\int_0^1 e^{2\pi i \gamma x} \intd{x}
	\right|
	&=
	\left|
	\int_0^{\frac{\floor{\gamma}}{\gamma}} e^{2\pi i \gamma x} \intd{x} +
	\int_{\frac{\floor{\gamma}}{\gamma}}^1 e^{2\pi i \gamma x} \intd{x}
	\right| \\
	&=
	\left|
	\int_{\frac{\floor{\gamma}}{\gamma}}^1 e^{2\pi i \gamma x} \intd{x}
	\right|
	\leq
	1 - \frac{\floor{\gamma}}{\gamma}
	\leq
	\frac{1}{\gamma}.
	\end{align*}
From this together with the identity
	$$
	e^{2\pi i \alpha x} \sin(2\pi \beta x) =
	\frac{i}{2} \left(
	e^{2\pi i (\alpha - \beta) x} - e^{2\pi i (\alpha + \beta) x}
	\right),
	$$
it follows that
	\begin{align*}
	\left|
	\int_0^1 e^{2 \pi i \alpha x} \sin(2 \pi \beta x) \intd{x}
	\right|
	&\leq
	\frac{1}{2}
	\left|
	\int_0^1 e^{2 \pi i |\alpha - \beta| x} \intd{x}
	\right|
	+
	\frac{1}{2}
	\left|
	\int_0^1 e^{2 \pi i |\alpha + \beta| x} \intd{x}
	\right| \\
	&\leq
	\frac{1}{2} \left(
	\frac{1}{|\alpha - \beta|} + \frac{1}{|\alpha + \beta|}
	\right)
	\leq
	\frac{1}{\big| |\alpha| - |\beta| \big|}.
	\qedhere
	\end{align*}
\end{proof}
\end{lemma}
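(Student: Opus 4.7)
The plan is to reduce everything to bounding a single oscillating exponential $\int_0^1 e^{2\pi i \gamma x}\intd{x}$, obtained after decomposing the sine via Euler's formula.

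First I would prove the auxiliary estimate that for any $\gamma > 0$,
$$\left| \int_0^1 e^{2\pi i \gamma x}\intd{x}\right| \leq \frac{1}{\gamma}.$$
The natural attempt $\bigl|(e^{2\pi i \gamma}-1)/(2\pi i \gamma)\bigr| \leq 1/(\pi\gamma)$ would carry an unwanted factor of $\pi$, so instead I would split the interval at $\floor{\gamma}/\gamma$: the integral over $[0, \floor{\gamma}/\gamma]$ vanishes since it covers exactly $\floor{\gamma}$ full periods of the integrand, and the remainder has length at most $1/\gamma$ with integrand of modulus one. This gives the bound in exactly the form needed.

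Next I would apply the identity
$$e^{2\pi i \alpha x} \sin(2\pi \beta x) = \tfrac{i}{2}\bigl(e^{2\pi i(\alpha - \beta)x} - e^{2\pi i (\alpha+\beta)x}\bigr),$$
so that by the triangle inequality the integral in question is bounded by
$$\tfrac{1}{2}\left|\int_0^1 e^{2\pi i (\alpha-\beta)x}\intd{x}\right| + \tfrac{1}{2}\left|\int_0^1 e^{2\pi i (\alpha+\beta)x}\intd{x}\right|.$$
Since $|e^{2\pi i \gamma x}| = |e^{-2\pi i \gamma x}|$, the auxiliary estimate applies with $\gamma$ replaced by $|\alpha-\beta|$ and $|\alpha+\beta|$ respectively, yielding the upper bound $\tfrac{1}{2}\bigl(1/|\alpha-\beta| + 1/|\alpha+\beta|\bigr)$.

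Finally I would finish with the reverse triangle inequality, which gives both $\bigl||\alpha|-|\beta|\bigr| \leq |\alpha - \beta|$ and $\bigl||\alpha|-|\beta|\bigr| \leq |\alpha + \beta|$ (the second by applying the standard reverse triangle inequality to $\alpha$ and $-\beta$). Hence each of the two reciprocals is at most $1/\bigl||\alpha|-|\beta|\bigr|$, and averaging yields the stated bound. The edge case $\alpha + \beta = 0$ (with $\alpha \neq \beta$) forces $|\alpha| = |\beta|$, making the right-hand side infinite and the inequality vacuous. There is no genuine obstacle here; the only non-routine move is the period-splitting trick that avoids the factor of $\pi$ and produces the clean constant $1$ in the numerator.
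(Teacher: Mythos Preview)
Your proposal is correct and follows essentially the same approach as the paper: the period-splitting estimate $\left|\int_0^1 e^{2\pi i \gamma x}\intd{x}\right|\le 1/\gamma$, the Euler decomposition of the sine, and the final reverse-triangle-inequality step are all exactly as in the paper's proof. Your treatment of the edge case $\alpha+\beta=0$ is an extra observation the paper leaves implicit.
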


\begin{example}	\label{measex}
This example shows that the Fourier dimension for measures is not in general finitely stable.

Let
	$$
	g(x) = 1 + \sum_{k = 1}^\infty 2^{-k} \sin\left(2\pi \cdot 2^{k^2} x\right);
	$$
this is a continuous non-negative function. Define the probability measure
$\mu$ on $[0, 1]$ by $\intd{\mu} = g \intd{x}$.
Using that
	$$
	\int_0^1 e^{-2 \pi i l x} \sin(2 \pi l x) \intd{x} =
	\int_0^1 \cos(2 \pi l x) \sin(2 \pi l x) \intd{x} -
	i \int_0^1 \sin^2(2 \pi l x) \intd{x}
	= \frac{-i}{2}
	$$
for $l \in \nat$, one sees that for $n \geq 1$
	\begin{align*}
	\left|
	\widehat\mu\left(2^{n^2}\right) + i \cdot 2^{-(n + 1)}
	\right|
	&\leq
	\sum_{\substack{k \geq 1 \\ k \neq n}} 2^{-k}
	\left|
	\int_0^1 e^{-2 \pi i \cdot 2^{n^2} x} \sin(2 \pi \cdot 2^{k^2} x) \intd{x}
	\right| \\
	&\stackrel{*}{\leq}
	\sum_{\substack{k \geq 1 \\ k \neq n}} \frac{2^{-k}}{\left|2^{n^2} - 2^{k^2}\right|}
	\leq
	\frac{\sum_{k = 1}^\infty 2^{-k}}{2^{n^2} - 2^{(n - 1)^2}}
	\leq \frac{2}{2^{n^2}},
	\end{align*}
where the inequality at $*$ is by Lemma~\ref{oscintlemma}. Thus for any $s > 0$,
	$$
	\limsup_{|\xi| \to \infty} | \widehat\mu(\xi) | |\xi|^{s / 2} \geq
	\limsup_{n \to \infty} \left| \widehat\mu\left(2^{n^2}\right) \right| \cdot 2^{sn^2 / 2} \geq
	\lim_{n \to \infty} \left( 2^{-(n + 1)} - \frac{2}{2^{n^2}} \right) \cdot 2^{sn^2 / 2} = \infty,
	$$
and it follows that $\dimf \mu = 0$.

Next, let
	$$
	h(x) = 1 - \sum_{k = 1}^\infty 2^{-k} \sin(2\pi \cdot 2^{k^2} x)
	$$
and define the probability measure $\nu$ on $[0, 1]$ by $\intd{\nu} = h \intd{x}$.
Then $\dimf \nu = 0$ as well, but $\mu + \nu$ is twice Lebesgue measure, which
has Fourier dimension $1$.
\end{example}

Even though \eqref{measstabeq} does not hold in general, it does hold if $\mu$ and
$\nu$ have different Fourier dimensions. For suppose that, say,
$\dimf \mu < \dimf \nu$. Then for every $s \in (\dimf \mu, \, \dimf \nu)$ there
is a sequence $(\xi_k)$ with $|\xi_k| \to \infty$ such that
	$$
	\lim_{k \to \infty} |\widehat\mu(\xi_k)| |\xi_k|^{s/2} = \infty
	\qquad \text{and} \qquad
	\lim_{k \to \infty} |\widehat\nu(\xi_k)| |\xi_k|^{s/2} = 0,
	$$
so that
	$$
	\limsup_{|\xi| \to \infty}
	\left| \widehat\mu(\xi) + \widehat\nu(\xi) \right| |\xi|^{s / 2} = \infty
	$$
and hence $\dimf (\mu + \nu) \leq s$. Thus
	$$
	\dimf(\mu + \nu) \leq \inf \{ s \in (\dimf \mu, \, \dimf \nu) \} = \dimf \mu
	\leq \dimf(\mu + \nu).
	$$
For the same reason, any convex
combination of $\mu$ and $\nu$ satisfies
	$$
	\dimf((1 - \lambda) \mu + \lambda \nu) = \min(\dimf \mu, \, \dimf \nu).
	$$

Next suppose that $\dimf \mu = \dimf \nu = s$
and that there is some $\lambda_0 \in [0, 1]$ such that
	$$
	\dimf((1 - \lambda_0) \mu + \lambda_0 \nu) > s.
	$$
Then for any $\lambda \in [0, 1] \setminus \{ \lambda_0 \}$, the measure
$(1 - \lambda) \mu + \lambda \nu$ is a convex combination of
$(1 - \lambda_0) \mu + \lambda_0 \nu$ and one of $\mu, \nu$, so it
has Fourier dimension $s$. Thus there is at most one convex combination of
$\mu$ and $\nu$ that has Fourier dimension greater than $s$.

The results in the rest of this section describe situations where \eqref{measstabeq}
holds, or where it fails for at most one value of some parameter.

\begin{proposition}	\label{sepmeasprop}
Let $\mu$ and $\nu$ be finite Borel measures on $\rea^d$ whose supports are compact
and disjoint. Then
	$$
	\dimf(\mu + \nu) = \min(\dimf \mu, \, \dimf \nu).
	$$

\begin{proof}
Let $f$ be a non-negative, smooth and compactly supported function that has the
value $1$ on $\supp \mu$ and the value $0$ on $\supp \nu$.
Then $f \intd{(\mu + \nu)} = \intd{\mu}$, so
	$$
	\dimf \mu \geq \dimf(\mu + \nu)
	$$
by Lemma~\ref{smoothcutlemma}. Similarly,
	$$
	\dimf \nu \geq \dimf(\mu + \nu),
	$$
and thus
	$$
	\dimf(\mu + \nu) \leq \min(\dimf \mu, \, \dimf \nu).
	$$
The proposition now follows since the opposite inequality always holds.
\end{proof}
\end{proposition}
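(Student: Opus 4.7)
The plan is to exploit the disjointness and compactness of the supports to find a smooth cutoff function that picks out $\mu$ from $\mu + \nu$, and then apply Lemma~\ref{smoothcutlemma} to conclude that cutting off does not destroy the Fourier decay. The inequality $\geq$ in the conclusion holds for any pair of measures (since the set of functions $\lesssim |\xi|^{-s/2}$ is closed under sums, as was noted in the introduction), so only the reverse inequality $\dimf(\mu + \nu) \leq \min(\dimf \mu,\dimf\nu)$ needs a proof.

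To produce the cutoff, I would use that $\supp \mu$ and $\supp\nu$ are compact and disjoint, hence separated by a positive distance $\delta > 0$. A standard mollification of the indicator function of the $\delta/3$-neighbourhood of $\supp\mu$ then yields a non-negative $C^\infty$-function $f$ with compact support that equals $1$ on $\supp\mu$ and vanishes on $\supp\nu$. Consequently $f \intd{(\mu + \nu)} = f \intd{\mu} = \intd{\mu}$.

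Now Lemma~\ref{smoothcutlemma} applied to the measure $\mu + \nu$ with cutoff $f$ gives
$$\dimf \mu = \dimf(f \cdot (\mu + \nu)) \geq \dimf(\mu + \nu).$$
Exchanging the roles of $\mu$ and $\nu$ (using a symmetric smooth cutoff that is $1$ on $\supp\nu$ and $0$ on $\supp\mu$) gives $\dimf \nu \geq \dimf(\mu + \nu)$, and therefore $\dimf(\mu + \nu) \leq \min(\dimf\mu, \dimf\nu)$.

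There is no real obstacle here: the only nontrivial input is the smooth-cutoff lemma already at hand, and the construction of the cutoff is classical once one observes that two disjoint compact sets in $\rea^d$ have positive separation. The point of the proposition is really just to package this observation together with Lemma~\ref{smoothcutlemma}, in contrast to Example~\ref{measex} where the two measures share the same support and no such localisation is possible.
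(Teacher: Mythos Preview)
Your proposal is correct and follows essentially the same argument as the paper: construct a smooth compactly supported cutoff $f$ that is $1$ on $\supp\mu$ and $0$ on $\supp\nu$, observe that $f\intd{(\mu+\nu)} = \intd{\mu}$, and invoke Lemma~\ref{smoothcutlemma} (then symmetrise) to obtain $\dimf(\mu+\nu)\leq\min(\dimf\mu,\dimf\nu)$. The only difference is that you spell out the mollification construction of $f$, which the paper leaves implicit.
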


\begin{proposition}
Let $\mu$ be a finite Borel measure on $\rea^d$ with compact support
and let $\mu_t$ be the translation of $\mu$ by $t \in \rea^d$. Then
	$$
	\dimf (\mu + \mu_t) = \dimf \mu.
	$$

\begin{proof}
Since the Fourier dimension is translation invariant,
	$$
	\dimf (\mu + \mu_t) \geq \min(\dimf \mu , \, \dimf \mu_t) = \dimf \mu.
	$$
The opposite inequality clearly holds if $t = 0$, so assume that $t \neq 0$ and
let $n$ be an odd integer so large that $\supp \mu \cap \supp \mu_{nt} = \emptyset$.
Note that
	$$
	\left| \widehat{\mu + \mu}_t(\xi) \right| =
	\left| \left( 1 + e^{-2\pi i t \cdot \xi } \right) \widehat\mu(\xi) \right|
	=
	2\, |\cos(\pi t \cdot \xi)| |\widehat\mu(\xi)|,
	$$
and similarly
	$$
	\left| \widehat{\mu + \mu}_{nt}(\xi) \right| =
	2\, |\cos(\pi n t \cdot \xi)| |\widehat\mu(\xi)|.
	$$
Since $\cos(nx) / \cos x$ is bounded, this gives
	$$
	\left| \widehat{\mu + \mu}_{nt}(\xi) \right| =
	2\, \left| \frac{\cos(\pi n t \cdot \xi)}{\cos(\pi t \cdot \xi)} \right|
	|\cos(\pi t \cdot \xi)||\widehat\mu(\xi)|
	\lesssim
	\left| \widehat{\mu + \mu}_t(\xi) \right|.
	$$
Thus
	$$
	\dimf (\mu + \mu_t) \leq \dimf (\mu + \mu_{nt}) =
	\min (\dimf \mu, \dimf \mu_{nt} ) = \dimf \mu,
	$$
where the first equality is by Proposition~\ref{sepmeasprop}.
\end{proof}
\end{proposition}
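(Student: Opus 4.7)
The inequality $\dimf(\mu + \mu_t) \geq \dimf \mu$ is immediate from the general $\geq$-direction in \eqref{measstabeq} together with the translation invariance $\dimf \mu_t = \dimf \mu$, so the task is to prove $\dimf(\mu + \mu_t) \leq \dimf \mu$. The plan is to relate $\mu + \mu_t$ to $\mu + \mu_{nt}$ for a carefully chosen odd integer $n$, exploiting the two facts that (a) for $n$ large enough the supports of $\mu$ and $\mu_{nt}$ are disjoint (so Proposition~\ref{sepmeasprop} applies), and (b) for $n$ odd the function $\cos(n\theta)/\cos(\theta)$ extends to a bounded function on $\rea$.

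First I would compute the Fourier transform: since $\widehat{\mu_t}(\xi) = e^{-2\pi i t \cdot \xi}\,\widehat\mu(\xi)$,
	$$
	\left|\widehat{\mu + \mu_t}(\xi)\right| = 2\,|\cos(\pi t \cdot \xi)|\,|\widehat\mu(\xi)|,
	$$
and analogously $|\widehat{\mu + \mu_{nt}}(\xi)| = 2\,|\cos(\pi n t \cdot \xi)|\,|\widehat\mu(\xi)|$. The naive temptation to divide and conclude $|\widehat\mu(\xi)| \lesssim |\widehat{\mu + \mu_t}(\xi)|$ fails because $\cos(\pi t \cdot \xi)$ vanishes on a family of hyperplanes; this is the main obstacle.

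The workaround is to pick $n$ odd and so large that the supports are disjoint, which is possible because $\supp\mu$ is compact and $t \neq 0$ (the case $t = 0$ is trivial). Proposition~\ref{sepmeasprop} then yields $\dimf(\mu + \mu_{nt}) = \min(\dimf \mu, \dimf \mu_{nt}) = \dimf \mu$. To transfer this bound back to $\mu + \mu_t$, I would invoke the identity $\cos(n\theta) = T_n(\cos\theta)$ for the Chebyshev polynomial $T_n$; since $n$ is odd, $T_n$ is an odd polynomial, so $T_n(x)/x$ is a polynomial and the ratio $\cos(n\theta)/\cos(\theta)$ is bounded on $\rea$. Applying this pointwise to $\theta = \pi t \cdot \xi$ gives
	$$
	\left|\widehat{\mu + \mu_{nt}}(\xi)\right|
	\lesssim \left|\widehat{\mu + \mu_t}(\xi)\right|,
	$$
and hence $\dimf(\mu + \mu_t) \leq \dimf(\mu + \mu_{nt}) = \dimf \mu$, completing the proof.
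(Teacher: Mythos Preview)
Your proof is correct and follows essentially the same approach as the paper: both compute $|\widehat{\mu+\mu_t}(\xi)|=2|\cos(\pi t\cdot\xi)||\widehat\mu(\xi)|$, choose an odd $n$ large enough to separate the supports, use the boundedness of $\cos(n\theta)/\cos\theta$ to compare $\mu+\mu_{nt}$ with $\mu+\mu_t$, and then invoke Proposition~\ref{sepmeasprop}. The only difference is cosmetic---you justify the boundedness via the Chebyshev identity $\cos(n\theta)=T_n(\cos\theta)$ with $T_n$ odd, while the paper simply asserts it.
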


\begin{proposition}
Let $\mu$ and $\nu$ be finite Borel measures on $\rea^d$ with compact supports,
and for $t \in \rea^d$ let $\nu_t$ be the translation of $\nu$ by $t$. Then there is
at most one $t$ such that
	$$
	\dimf(\mu + \nu_t) > \min(\dimf \mu, \, \dimf \nu).
	$$

\begin{proof}
It shall be shown that
	$$
	\min(\dimf(\mu + \nu_{t_1}), \, \dimf(\mu + \nu_{t_2})) \leq
	\min(\dimf \mu, \, \dimf \nu)
	$$
whenever $t_1 \neq t_2$. By the translation invariance of $\dimf$, this
is equivalent to
	$$
	\min(\dimf(\kappa + \nu), \, \dimf(\kappa + \nu_\Delta)) \leq
	\min(\dimf \kappa, \, \dimf \nu),
	$$
where $\kappa$ is the translation of $\mu$ by $-t_1$ and $\Delta = t_2 - t_1$.
Suppose that $s$ is less than the expression on the left --- then $s < d$ and
	\begin{equation} \label{relsyst}
	\left\{
	\begin{array}{l}
	\widehat\kappa(\xi) + \phantom{e^{-2\pi i \Delta \cdot \xi} \, }\widehat\nu(\xi) \lesssim |\xi|^{-s/2} \\
	\widehat\kappa(\xi) + e^{-2\pi i \Delta \cdot \xi} \, \widehat\nu(\xi) \lesssim |\xi|^{-s/2}.
	\end{array}
	\right.
	\end{equation}
Let $n$ be an integer so large that $\supp \kappa \cap \supp \nu_{n \Delta} = \emptyset$.
Subtracting the second relation in \eqref{relsyst} from the first gives
	$$
	\sin(\pi\Delta \cdot \xi) \widehat\nu(\xi)
	\lesssim
	\left( 1 - e^{-2\pi i \Delta \cdot \xi} \right) \widehat\nu(\xi)
	\lesssim
	|\xi|^{-s / 2},
	$$
and since $\sin(nx)/\sin x$ is bounded it follows that
	$$
	\left( 1 - e^{-2\pi i n \Delta \cdot \xi} \right) \widehat\nu(\xi)
	\lesssim
	\frac{\sin(\pi n \Delta \cdot \xi)}{\sin(\pi\Delta \cdot \xi)}
	\sin(\pi\Delta \cdot \xi) \widehat\nu(\xi) \lesssim |\xi|^{-s / 2}.
	$$
Subtracting this from the first relation in \eqref{relsyst} then gives
	$$
	\widehat\kappa(\xi) + e^{-2\pi i n\Delta \cdot \xi} \, \widehat\nu(\xi) \lesssim |\xi|^{-s / 2},
	$$
and thus
	$$
	s \leq \dimf (\kappa + \nu_{n\Delta}) = \min(\dimf \kappa, \, \dimf \nu_{n\Delta})
	= \min(\dimf \kappa, \dimf \nu),
	$$
where the first equality is by Proposition~\ref{sepmeasprop}.
\end{proof}
\end{proposition}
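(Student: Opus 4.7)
The plan is to mimic the proof of the preceding proposition: suppose for contradiction that there exist two distinct translations $t_1, t_2$ with $\dimf(\mu + \nu_{t_i}) > \min(\dimf\mu, \dimf\nu)$, pick $s$ strictly between these values, and derive a contradiction by producing a third translation $n\Delta$ for which $\supp\kappa$ and $\supp\nu_{n\Delta}$ are disjoint, so that Proposition~\ref{sepmeasprop} applies. Translation invariance of $\dimf$ reduces the problem to the situation $t_1 = 0$, with $\kappa$ denoting the appropriate translate of $\mu$ and $\Delta = t_2 - t_1 \neq 0$, so the two hypotheses become the two relations
	$$
	\widehat\kappa(\xi) + \widehat\nu(\xi) \lesssim |\xi|^{-s/2},
	\qquad
	\widehat\kappa(\xi) + e^{-2\pi i \Delta \cdot \xi}\,\widehat\nu(\xi) \lesssim |\xi|^{-s/2}.
	$$

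Subtracting eliminates $\widehat\kappa$ and leaves $(1 - e^{-2\pi i \Delta \cdot \xi})\,\widehat\nu(\xi) \lesssim |\xi|^{-s/2}$, which up to a unimodular factor says $\sin(\pi\Delta\cdot\xi)\,\widehat\nu(\xi) \lesssim |\xi|^{-s/2}$. The key tool is now the elementary identity $\sin(nx)/\sin(x) = U_{n-1}(\cos x)$ (Chebyshev polynomial of the second kind), so that for any fixed integer $n$ the ratio $\sin(\pi n \Delta \cdot \xi)/\sin(\pi\Delta\cdot\xi)$ is globally bounded. Multiplying the previous bound by this ratio upgrades the estimate to
	$$
	\bigl(1 - e^{-2\pi i n \Delta \cdot \xi}\bigr)\,\widehat\nu(\xi) \lesssim |\xi|^{-s/2}.
	$$
Subtracting this from the first relation in the system then yields $\widehat\kappa(\xi) + e^{-2\pi i n\Delta\cdot\xi}\,\widehat\nu(\xi) \lesssim |\xi|^{-s/2}$, i.e.\ $\dimf(\kappa + \nu_{n\Delta}) \geq s$.

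Finally, because $\supp\kappa$ and $\supp\nu$ are compact, one can take $n$ so large that $\supp\kappa \cap \supp\nu_{n\Delta} = \emptyset$. Then Proposition~\ref{sepmeasprop} gives $\dimf(\kappa + \nu_{n\Delta}) = \min(\dimf\kappa, \dimf\nu_{n\Delta}) = \min(\dimf\mu, \dimf\nu)$, so $s \leq \min(\dimf\mu, \dimf\nu)$, contradicting the choice of $s$.

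The only delicate point is handling the zeros of $\sin(\pi\Delta\cdot\xi)$: without the boundedness of $\sin(nx)/\sin(x)$ we would have no control on $\widehat\nu(\xi)$ at frequencies where $\Delta\cdot\xi$ is an integer, and the whole argument would collapse. Everything else is linear algebra on the two given bounds and a direct appeal to the previously established disjoint-supports proposition.
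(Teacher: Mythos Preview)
Your argument is correct and follows the paper's proof essentially verbatim: reduce via translation invariance to $\kappa + \nu$ and $\kappa + \nu_\Delta$, subtract the two decay relations to isolate $\sin(\pi\Delta\cdot\xi)\,\widehat\nu(\xi)$, use boundedness of $\sin(nx)/\sin x$ to pass to $\nu_{n\Delta}$, and then invoke Proposition~\ref{sepmeasprop} once $n$ is large enough to separate the supports. The only cosmetic difference is that you frame the conclusion as a contradiction while the paper phrases it as a direct inequality, and you add the Chebyshev-polynomial remark to justify the boundedness of $\sin(nx)/\sin x$.
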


\begin{lemma}	\label{Blemma}
Let $B \in \rea^{d \times d}$ be an invertible matrix such that $|\lambda| \neq 1$ for 
all eigenvalues $\lambda$ of $B$, and let $f: \rea^d \to \rea$ be a function such
that $\lim_{|\xi| \to \infty} f(\xi) = 0$ and
	$$
	\left| f(\xi) - f(B\xi) \right| \lesssim |\xi|^{-\alpha}
	$$
for some $\alpha > 0$. Then
	$$
	f(\xi) \lesssim |\xi|^{-\alpha}.
	$$

\begin{proof}
Using that $|B^{-1} \xi| \geq \| B \|^{-1} |\xi|$ one sees that
	$$
	\left| f(B^{-1}\xi) - f(\xi) \right| \lesssim |B^{-1}\xi|^{-\alpha} \lesssim |\xi|^{-\alpha},
	$$
and thus there is a constant $C$ such that
	$$
	|f(\xi) - f(B\xi)| \leq C |\xi|^{-\alpha}
	\quad\text{and}\quad
	|f(\xi) - f(B^{-1}\xi)|	\leq C |\xi|^{-\alpha}
	$$
for all $\xi$.

Let
	$$
	V_u = \bigoplus_{|\lambda| > 1} E_\lambda,
	\qquad
	V_s = \bigoplus_{|\lambda| < 1} E_\lambda,	
	$$
where $E_\lambda$ denotes the (generalised) eigenspace of $B$ corresponding to the eigenvalue
$\lambda$. Then there is some $c > 1$ and an $m$ such that if $k \geq m$ then
$|B^k \xi| \geq c^k |\xi|$ for all $\xi \in V_u$ and $|B^{-k}\xi| \geq c^k |\xi|$ for all
$\xi \in V_s$ (this need not be true with $m = 1$, for instance if $B$ is
a large Jordan block with diagonal entries only slightly larger than $1$).

Take any $\xi \in \rea^d \setminus \{ 0 \}$ and write it as $\xi = \xi_u + \xi_s$ with 
$\xi_u \in V_u$ and $\xi_s \in V_s$. Suppose first that
$|\xi_u| \geq |\xi_s|$, or equivalently that $|\xi_u| \geq |\xi| / \sqrt{2}$.
Then for any $n \geq m$,
	\begin{align*}
	|f(\xi) - f(B^n\xi)| &\leq
	\sum_{k = 0}^{n - 1} |f(B^k\xi) - f(B^{k + 1}\xi)|
	\leq
	C \sum_{k = 0}^{n - 1} |B^k\xi|^{-\alpha}
	\leq
	C \sum_{k = 0}^{n - 1} |B^k\xi_u|^{-\alpha} \\
	&\leq
	C\left( \sum_{k = 0}^{m - 1} \big\| B^{-1} \big\|^{k \alpha} + 
	\sum_{k = m}^\infty c^{-k\alpha} \right) |\xi_u|^{-\alpha} \\
	&\leq
	C \left( \sum_{k = 0}^{m - 1} \big\| B^{-1} \big\|^{k \alpha} +
	\sum_{k = m}^\infty c^{-k\alpha} \right) 2^{\alpha / 2} \, |\xi|^{-\alpha}.
	\end{align*}
Letting $n \to \infty$ shows that $|f(\xi)| \leq D |\xi|^{-\alpha}$ with
	$$
	D = 2 C \left(
	\max\left(\sum_{k = 0}^{m - 1} \big\| B^{-1} \big\|^{k \alpha}, \, 
	\sum_{k = 0}^{m - 1} \big\| B      \big\|^{k \alpha}\right)
	+ \sum_{k = m}^\infty c^{-k\alpha} \right) 2^{\alpha / 2}.
	$$
Similarly, if $|\xi_s| \geq |\xi_u|$ then for any $n \geq m$,
	\begin{align*}
	|f(\xi) - f(B^{-n}\xi)| \big| \leq \ldots \leq
	C \left( \sum_{k = 0}^{m - 1} \big\| B \big\|^{k \alpha} +
	\sum_{k = m}^\infty c^{-k\alpha} \right) 
	2^{\alpha / 2} \, |\xi|^{-\alpha},
	\end{align*}
and letting $n \to \infty$ shows that $|f(\xi)| \leq D |\xi|^{-\alpha}$ in this case as well.
\end{proof}
\end{lemma}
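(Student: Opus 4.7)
The plan is to iterate the cohomological bound $|f(\xi) - f(B\xi)| \lesssim |\xi|^{-\alpha}$ along the $B$-orbit of $\xi$ and telescope, using that points escape to infinity in one of the two time directions. Since $B$ has no eigenvalues on the unit circle, there is a hyperbolic splitting $\rea^d = V_u \oplus V_s$ into $B$-invariant subspaces, where $V_u$ (resp.\ $V_s$) is the sum of generalised eigenspaces for eigenvalues with $|\lambda| > 1$ (resp.\ $|\lambda| < 1$). On $V_u$ one has $c > 1$ and $m$ such that $|B^k \eta| \geq c^k |\eta|$ for $\eta \in V_u$ and $k \geq m$; on $V_s$ the same holds for $B^{-1}$. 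Let $\pi_u, \pi_s$ be the (bounded) projections along this splitting, and write $\xi = \xi_u + \xi_s$. The triangle inequality gives $\max(|\xi_u|, |\xi_s|) \geq |\xi|/2$, and since $\pi_u$ commutes with $B$ one has $|B^k\xi_u| = |\pi_u(B^k\xi)| \leq \|\pi_u\| |B^k \xi|$ uniformly in $k$, and similarly for $\xi_s$.

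In the case $|\xi_u| \geq |\xi|/2$ I iterate $B$ forward. Telescoping and applying the hypothesis to each term yields
$$
|f(\xi) - f(B^n\xi)| \leq \sum_{k=0}^{n-1} |f(B^k\xi) - f(B^{k+1}\xi)| \lesssim \sum_{k=0}^{n-1} |B^k\xi|^{-\alpha} \lesssim \sum_{k=0}^{n-1} |B^k\xi_u|^{-\alpha}.
$$
The tail $k \geq m$ is bounded by $|\xi_u|^{-\alpha} \sum_{k \geq m} c^{-k\alpha}$, a convergent geometric series; the finitely many terms with $k < m$ are bounded by a fixed constant times $|\xi_u|^{-\alpha}$ using the crude estimate $|B^k\xi_u| \geq \|B^{-1}\|^{-k}|\xi_u|$. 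Hence the entire sum is $\lesssim |\xi_u|^{-\alpha} \lesssim |\xi|^{-\alpha}$. Sending $n \to \infty$ and using $|B^n\xi| \gtrsim c^n |\xi_u| \to \infty$ together with the hypothesis $f(\eta) \to 0$ gives $|f(\xi)| \lesssim |\xi|^{-\alpha}$.

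If instead $|\xi_s| \geq |\xi|/2$, I iterate $B^{-1}$ forward. Applying the hypothesis to $B^{-1}\xi$ and using $|B^{-1}\xi| \geq \|B\|^{-1}|\xi|$ gives the backward one-step bound $|f(B^{-1}\xi) - f(\xi)| \lesssim |\xi|^{-\alpha}$. Telescoping along $(B^{-k}\xi)_{k \geq 0}$ and using the geometric expansion of $B^{-1}$ on $V_s$ yields the conclusion by the same argument with the roles of $V_u$ and $V_s$ reversed.

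The main technical obstacle is that the expansion rate on $V_u$ is not uniform in $k$ starting from $k=0$: for a large Jordan block with eigenvalue only slightly above $1$, $\|B|_{V_u}^k\|$ may in fact contract for an initial window before genuine geometric growth kicks in. This is handled cleanly by splitting the telescoping sum into a uniformly bounded head of fixed length $m$ (controlled by the trivial $\|B^{-1}\|$-based bound) and a geometrically convergent tail, so that both pieces scale like $|\xi|^{-\alpha}$.
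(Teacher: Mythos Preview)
Your proof is correct and follows essentially the same approach as the paper's: hyperbolic splitting, telescoping along the $B$-orbit in the expanding direction, splitting the sum into a fixed-length head and a geometric tail, and then letting $n\to\infty$ using $f\to 0$ at infinity. Your use of the bounded projection $\pi_u$ to obtain $|B^k\xi|\gtrsim|B^k\xi_u|$ is in fact slightly more careful than the paper, which writes $|B^k\xi|^{-\alpha}\le |B^k\xi_u|^{-\alpha}$ directly (tacitly treating the splitting as if it were orthogonal).
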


\begin{proposition}
Let $\mu$ be a finite Borel measure on $\rea^d$ such that
$\lim_{|\xi| \to \infty} \widehat\mu(\xi) = 0$, and let $A \in \rea^{d \times d}$
be an invertible matrix such that $|\lambda| \neq 1$ for all eigenvalues $\lambda$ of $A$. Then
	$$
	\dimf (\mu + A \mu) = \dimf \mu.
	$$

\begin{proof}
Since the Fourier dimension is invariant under invertible linear
transformations,
	$$
	\dimf (\mu + A\mu) \geq \min(\dimf \mu, \, \dimf(A\mu)) = \dimf \mu,
	$$
and in particular the lemma is true if $\dimf (\mu + A\mu) = 0$.
To see the opposite inequality when $\dimf (\mu + A\mu) > 0$, 
take any $s \in (0, \dimf (\mu + A\mu))$ and let $B = A^T$, so that
	$$
	\widehat{A\mu}(\xi) = \widehat\mu(A^T \xi) = \widehat\mu(B\xi).
	$$
Then
	$$
	\big| |\widehat\mu(\xi)| - |\widehat\mu(B\xi)| \big|
	\leq
	|\widehat\mu(\xi) + \widehat\mu(B\xi)| \lesssim |\xi|^{- s / 2},
	$$
so Lemma~\ref{Blemma} applied to $f(\xi) = |\widehat\mu(\xi)|$ says that
$\widehat\mu(\xi) \lesssim |\xi|^{-s / 2}$ and therefore $s \leq \dimf \mu$.
\end{proof}
\end{proposition}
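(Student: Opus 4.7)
The inequality $\dimf(\mu + A\mu) \geq \dimf\mu$ is routine: since the Fourier dimension is invariant under invertible linear transformations, $\dimf(A\mu) = \dimf\mu$, and the sum of two functions that are $\lesssim |\xi|^{-s/2}$ is again $\lesssim |\xi|^{-s/2}$. In particular the statement is trivial when $\dimf(\mu + A\mu) = 0$, so I would assume this quantity is positive and focus on the opposite inequality.

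For the upper bound, the plan is to put Lemma~\ref{Blemma} to use. First compute the Fourier transform of the push-forward: with $B = A^T$, a direct change of variables gives $\widehat{A\mu}(\xi) = \widehat\mu(B\xi)$, so $B$ has the same eigenvalues as $A$ and thus satisfies the hypothesis of Lemma~\ref{Blemma}. Fix any $s$ with $0 < s < \dimf(\mu + A\mu)$; by definition
	$$
	\widehat\mu(\xi) + \widehat\mu(B\xi) = \widehat{\mu + A\mu}(\xi) \lesssim |\xi|^{-s/2}.
	$$

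The key trick is the reverse-triangle inequality $\bigl|\,|a| - |b|\,\bigr| \leq |a + b|$ applied pointwise, which gives
	$$
	\bigl| \, |\widehat\mu(\xi)| - |\widehat\mu(B\xi)| \, \bigr| \lesssim |\xi|^{-s/2}.
	$$
Now set $f(\xi) = |\widehat\mu(\xi)|$. The assumption $\widehat\mu(\xi) \to 0$ supplies the decay hypothesis $f(\xi) \to 0$ required by Lemma~\ref{Blemma}, and the display above supplies the recursion hypothesis with exponent $\alpha = s/2$. Applying the lemma yields $|\widehat\mu(\xi)| = f(\xi) \lesssim |\xi|^{-s/2}$, which means $s \leq \dimf\mu$. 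Taking the supremum over such $s$ gives $\dimf(\mu + A\mu) \leq \dimf\mu$.

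The main conceptual point, and the reason the hypothesis $|\lambda| \neq 1$ matters, is that from a single relation of the form $f(\xi) - f(B\xi) \lesssim |\xi|^{-\alpha}$ one cannot in general conclude $f(\xi) \lesssim |\xi|^{-\alpha}$; Lemma~\ref{Blemma} shows that this is nevertheless possible once the spectrum of $B$ avoids the unit circle, by iterating the relation along forward or backward $B$-orbits and splitting into the unstable and stable subspaces. The only small obstacle in applying the lemma is the reverse-triangle step, which must be done on the moduli rather than on $\widehat\mu$ itself because the passage from $\widehat\mu(\xi) + \widehat\mu(B\xi)$ to $\widehat\mu(\xi) - \widehat\mu(B\xi)$ is not available.
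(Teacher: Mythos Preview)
Your proposal is correct and matches the paper's proof essentially line for line: both establish the lower bound via invariance, set $B = A^T$, use the reverse triangle inequality $\bigl|\,|a|-|b|\,\bigr|\le |a+b|$ on $\widehat\mu(\xi)+\widehat\mu(B\xi)$, and then invoke Lemma~\ref{Blemma} with $f(\xi)=|\widehat\mu(\xi)|$. Your added remarks on why one must pass to moduli and why the spectral condition on $A$ is needed are accurate commentary but not additional argument.
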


\begin{proposition}
Let $\mu$ and $\nu$ be finite Borel measures on $\rea^d$ such that
	$$
	\lim_{|\xi| \to \infty} \widehat\mu(\xi) = \lim_{|\xi| \to \infty} \widehat\nu(\xi) = 0,
	$$
and let $A \in \rea^{d \times d}$
be a matrix such that $\re \lambda \neq 0$ for all eigenvalues $\lambda$ of $A$.
For $t \in \rea$, let $\nu_t = \exp(tA) \nu$. Then there is at most one $t$ such
that
	$$
	\dimf(\mu + \nu_t) > \min(\dimf \mu, \, \dimf \nu).
	$$

\begin{proof}
The statement is trivially true if $\mu$ and  $\nu$ have different Fourier
dimensions, so assume that
$\dimf \mu = \dimf \nu$. Take any distinct $t_1, t_2$ and suppose that
	\begin{align*}
	s
	&< \min(\dimf(\mu + \nu_{t_1}), \, \dimf(\mu + \nu_{t_2})) \\
	&= \min(\dimf(\kappa + \nu), \, \dimf(\kappa + \nu_{t_2 - t_1})),
	\end{align*}
where $\kappa = \exp(-t_1A)\mu$. Then $s < d$ and
	$$
	\left\{
	\begin{array}{ll}
	\widehat\kappa(\xi) + \widehat\nu(\xi)		&\lesssim |\xi|^{-s/2} \\
	\widehat\kappa(\xi) + \widehat\nu(B \xi)	&\lesssim |\xi|^{-s/2},
	\end{array}
	\right.
	$$
where $B = \exp((t_2 - t_1) A)^T$,
and subtracting the second relation from the first gives
	$$
	\big| |\widehat\nu(\xi)| - |\widehat\nu(B\xi)| \big|
	\leq
	\left| \widehat\nu(\xi) - \widehat\nu( B \xi) \right| \lesssim |\xi|^{- s / 2}.
	$$
The matrix $B$ has no eigenvalue on the unit circle, so
$\widehat\nu(\xi) \lesssim |\xi|^{-s / 2}$ by Lemma~\ref{Blemma} applied to
$f(\xi) = |\widehat\nu(\xi)|$. Thus $s \leq \dimf \nu$, which concludes the proof.
\end{proof}
\end{proposition}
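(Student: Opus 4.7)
The plan is to mirror the preceding proposition almost step for step, reducing matters to an application of Lemma~\ref{Blemma} to the function $f(\xi) = |\widehat\nu(\xi)|$. As noted earlier in the section, if $\dimf \mu \neq \dimf \nu$ then $\dimf(\mu + \nu_t) = \min(\dimf \mu, \dimf \nu)$ for every $t$, so one may assume $\dimf \mu = \dimf \nu$. Suppose for contradiction that there are two distinct values $t_1 \neq t_2$ with $\dimf(\mu + \nu_{t_i}) > \min(\dimf \mu, \dimf \nu)$ for $i = 1, 2$, and pick an $s$ strictly between $\min(\dimf \mu, \dimf \nu)$ and $\min(\dimf(\mu + \nu_{t_1}), \dimf(\mu + \nu_{t_2}))$.

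Next I would use the invariance of the Fourier dimension under invertible linear transformations to replace $\mu$ by $\kappa = \exp(-t_1 A)\mu$, which reduces the problem to the case $t_1 = 0$; setting $T = t_2 - t_1 \neq 0$, we obtain the system
\begin{equation*}
\left\{
\begin{array}{l}
\widehat\kappa(\xi) + \widehat\nu(\xi) \lesssim |\xi|^{-s/2}, \\
\widehat\kappa(\xi) + \widehat\nu(B\xi) \lesssim |\xi|^{-s/2},
\end{array}
\right.
\end{equation*}
where $B = \exp(TA)^T$, using that pushing $\nu$ forward by $\exp(TA)$ transforms the Fourier transform by $\xi \mapsto \exp(TA)^T \xi$. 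Subtracting one relation from the other eliminates $\widehat\kappa$ and gives
$$
\bigl| |\widehat\nu(\xi)| - |\widehat\nu(B\xi)| \bigr| \leq |\widehat\nu(\xi) - \widehat\nu(B\xi)| \lesssim |\xi|^{-s/2}.
$$

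To finish, I would apply Lemma~\ref{Blemma} with $f(\xi) = |\widehat\nu(\xi)|$ and $\alpha = s/2$; the hypothesis $\lim_{|\xi| \to \infty} \widehat\nu(\xi) = 0$ provides the decay of $f$ at infinity. The condition on eigenvalues needed in the lemma is that $B$ has no eigenvalue on the unit circle: the eigenvalues of $B$ are $e^{T\lambda}$ as $\lambda$ runs over the eigenvalues of $A$, and since $T \neq 0$ and $\re \lambda \neq 0$ by assumption, $|e^{T\lambda}| = e^{T \re \lambda} \neq 1$. The lemma then yields $|\widehat\nu(\xi)| \lesssim |\xi|^{-s/2}$, so $s \leq \dimf \nu = \min(\dimf \mu, \dimf \nu)$, contradicting the choice of $s$.

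The only nontrivial point in this argument, and essentially where all the work has been delegated, is the verification that the eigenvalue hypothesis of Lemma~\ref{Blemma} is satisfied; the condition $\re \lambda \neq 0$ on $A$ is tailored exactly so that the exponential $\exp(TA)$ (and hence its transpose) moves every eigenvalue off the unit circle for any nonzero real time $T$.
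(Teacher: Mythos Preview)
Your proof is correct and follows essentially the same route as the paper's: reduce to $\dimf\mu = \dimf\nu$, use the linear invariance of $\dimf$ to normalise $t_1 = 0$, subtract the two decay relations to eliminate $\widehat\kappa$, and then apply Lemma~\ref{Blemma} to $f(\xi) = |\widehat\nu(\xi)|$. You even supply the eigenvalue computation $|e^{T\lambda}| = e^{T\re\lambda} \neq 1$ that the paper leaves implicit, so there is nothing missing.
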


\section{The modified Fourier dimension}	\label{sec4}
Recall that the modified Fourier dimension of a Borel set $A \subset \rea^d$ is
defined by
	$$
	\dimfm A = \sup\left\{\dimf \mu; \, \mu \in \mathcal{P}(\rea^d), \,
	\mu(A) > 0 \right\}.
	$$

\begin{theorem}
The modified Fourier dimension is monotone and countably stable, and
satisfies $\dimf A \leq \dimfm A \leq \dimh A$ for any Borel set
$A \subset \rea^d$.

\begin{proof}
If $A \subset B$ then
$\{ \mu \in \probs(\rea^d); \, \mu(A) > 0 \} \subset \{ \mu \in \probs(\rea^d); \, \mu(B) > 0\}$, so
	\begin{align*}
	\dimfm A &=
	\sup \{ \dimf \mu; \, \mu \in \probs(\rea^d), \, \mu(A) > 0 \} \\ &\leq
	\sup \{ \dimf \mu; \, \mu \in \probs(\rea^d), \, \mu(B) > 0 \} =
	\dimfm B.
	\end{align*}
Thus $\dimfm$ is monotone.

Let $\{ A_k \}$ be a finite or countable family of Borel sets. For any
$\mu \in \probs(\rea^d)$ such that $\mu(\bigcup A_k) > 0$ there must
be some $n$ such that $\mu(A_n) > 0$, and thus
	$$
	\sup_k \, \dimfm A_k \geq \dimfm A_n \geq \dimf \mu.
	$$
Taking the supremum on the right over
$\{ \mu \in \probs(\rea^d); \, \mu(\bigcup A_k) > 0\}$ shows that
	$$
	\sup_k \, \dimfm A_k \geq \dimfm\left( \bigcup_k A_k \right).
	$$
The opposite inequality holds by monotonicity.

It is obvious that $\dimf A \leq \dimfm A$ since
any $\mu \in \probs(\rea^d)$ that gives full measure to $A$ in particular
gives positive measure to $A$. The proof outlined in the introduction of
the inequality $\dimf A \leq \dimh A$ works without modification if
$\dimf$ is replaced by $\dimfm$.
\end{proof}
\end{theorem}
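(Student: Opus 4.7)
The theorem makes three separate assertions, so I would handle them one at a time, each being essentially a one-line consequence of the definition once set up correctly.

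For monotonicity, I would observe that if $A \subset B$ then any $\mu \in \probs(\rea^d)$ with $\mu(A) > 0$ also satisfies $\mu(B) > 0$, so the supremum defining $\dimfm A$ is taken over a subset of the measures appearing in the supremum defining $\dimfm B$. The sandwich inequality $\dimf A \leq \dimfm A$ is similar: a measure $\mu \in \probs(\rea^d)$ with $\mu(A) = 1$ in particular has $\mu(A) > 0$, so the supremum defining $\dimfm A$ is over a larger class than the one defining $\dimf A$.

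The main content is countable stability. The inequality $\dimfm\bigl(\bigcup_k A_k\bigr) \geq \sup_k \dimfm A_k$ is immediate from monotonicity, so only the reverse needs work. The key observation is that if $\mu \in \probs(\rea^d)$ satisfies $\mu\bigl(\bigcup_k A_k\bigr) > 0$, then by countable subadditivity there must exist some $n$ with $\mu(A_n) > 0$, so $\dimf \mu \leq \dimfm A_n \leq \sup_k \dimfm A_k$. Taking the supremum over all such $\mu$ yields the desired bound. This is precisely the feature that fails for the ordinary $\dimf$: if one only required full measure, then a measure whose support meets every $A_k$ only partially would not show up in any individual $\dimf A_k$.

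For $\dimfm A \leq \dimh A$, I would reproduce the energy argument mentioned in the introduction with a trivial adjustment: given $\mu$ with $\mu(A) > 0$ and $\widehat\mu(\xi) \lesssim |\xi|^{-s/2}$, the restriction $\mu|_A / \mu(A)$ is a probability measure living on $A$, and $I_{s'}(\mu|_A / \mu(A)) \leq \mu(A)^{-2} I_{s'}(\mu) < \infty$ for any $s' < s$ (using that $\mu$ has compact support after first applying Lemma~\ref{smoothcutlemma} if needed, to make the identity relating $I_s$ to $|\widehat\mu|^2$ applicable). Frostman's energy method then gives $\dimh A \geq s'$, and letting $s' \to s$ and then $s \to \dimfm A$ finishes the argument. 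I do not anticipate any genuine obstacle; the point of the theorem is really that the new definition is designed precisely to make these inequalities work formally, with countable subadditivity of measures doing the heavy lifting.
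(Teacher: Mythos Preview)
Your proposal is correct and follows essentially the same route as the paper: monotonicity via inclusion of the classes of measures, countable stability via countable subadditivity of $\mu$ to locate an $A_n$ with $\mu(A_n)>0$, and the energy/Frostman argument for $\dimfm A \leq \dimh A$. The paper is terser on this last point (it simply notes that the introduction already stated the energy criterion with the hypothesis $\mu(A)>0$, so no modification is needed), whereas you spell out the restriction $\mu|_A/\mu(A)$ and the compact-support reduction via Lemma~\ref{smoothcutlemma}, but this is the same argument in slightly more detail.
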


The following two examples show that $\dimfm$ is not the same as either
of $\dimf$ and $\dimh$.

\begin{example}
The sets $B_n$ defined in Example~\ref{setex} were shown to have Fourier
dimension $0$ but positive Lebesgue measure, and hence modified Fourier
dimension $1$.
\end{example}

\begin{example} \label{cantorex}
The middle-third Cantor set has modified Fourier dimension $0$ (see the introduction),
but Hausdorff dimension $\log 2 / \log 3$.
\end{example}

\section{Null sets of $s$-dimensional measures}	\label{sec5}
Let
	$$
	\mathcal{M}_s = \left\{ \mu \in \probs(\rea^d); \, \dimfm \mu \geq s \right\}.
	$$
In this section it will be shown that $\mathcal{M}_s$ is characterised by its class
of common null sets, or more precisely that
	\begin{equation} \label{aimeq}
	\mu \in \mathcal{M}_s \, \iff \, \mu(E) = 0 \text{ for all } E \in \mathcal{E}_s,
	\end{equation}
where
	$$
	\mathcal{E}_s = \{ E \in \mathcal{B}(\rea^d); \,
	\mu(E) = 0 \text{ for all } \mu \in \mathcal{M}_s\}.
	$$
For $\mathcal{C} \subset \mathcal{P}(\rea^d)$ and $\mathcal{E} \subset \mathcal{B}(\rea^d)$
let
	\begin{align*}
	&\mathcal{C}^\perp = \{ E \in \mathcal{B}(\rea^d); \,
	\mu(E) = 0 \text{ for all } \mu \in \mathcal{C}\} \\
	&\mathcal{E}^\perp = \{ \mu \in \mathcal{P}(\rea^d); \,
	\mu(E) = 0 \text{ for all } E \in \mathcal{E}\}.
	\end{align*}
Then $\mathcal{E}_s = \mathcal{M}_s^\perp$ and the condition \eqref{aimeq} can be expressed
as $\mathcal{M}_s = \mathcal{E}_s^\perp$, or equivalently as $\mathcal{M}_s^{\perp\perp} = \mathcal{M}_s$.

It is also natural to consider the sets
	$$
	\mathcal{C}_s = \{ \mu \in \mathcal{P}(\rea^d); \, \exists \nu \in \mathcal{P}(\rea^d)
	\text{ such that } \mu \ll \nu \text{ and } \widehat\nu(\xi) \lesssim |\xi|^{-s / 2} \}.
	$$
For $s \in (0, d]$ they are related to $\mathcal{M}_s$ by
	$$
	\mathcal{M}_s = \bigcap_{t < s} \mathcal{C}_t
	$$
(this is also true for $s = 0$ if one allows negative $t$:s in the intersection).

The goal of this section is to prove the following theorem.

\begin{theorem}	\label{perpperpthm}
The sets $\mathcal{C}_s$ and $\mathcal{M}_s$ satisfy
	$$
	\begin{array}{rll}
	\mathcal{C}_s^{\perp\perp} \!\!\!\!\! &= \mathcal{C}_s, \quad &\text{for } 0 \leq s, \\
	\mathcal{M}_s^{\perp\perp} \!\!\!\!\! &= \mathcal{M}_s, \quad &\text{for } 0 \leq s \leq d.
	\end{array}
	$$
\end{theorem}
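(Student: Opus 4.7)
The inclusion $\mathcal{C} \subset \mathcal{C}^{\perp\perp}$ is trivial for any family $\mathcal{C}$, so the content is the reverse inclusion for $\mathcal{C}_s$; the $\mathcal{M}_s$ case is analogous, since $\mathcal{M}_s = \bigcap_{t < s} \mathcal{C}_t$ is itself a band. The first technical point is that $\mathcal{C}_s$, extended to finite positive measures as $\mathcal{C}_s^+$, is closed under absolute continuity from below and under countable sums of bounded total mass. Indeed, given $\mu_k \ll \nu_k$ with $|\widehat{\nu_k}(\xi)| \leq C_k(1 + |\xi|)^{-s/2}$ (where $C_k \geq 1$ since $|\widehat{\nu_k}(0)| = \nu_k(\rea^d) = 1$), the measure $\nu = \sum_k 2^{-k} C_k^{-1} \nu_k$ is finite and satisfies $|\widehat{\nu}(\xi)| \lesssim |\xi|^{-s/2}$, so it dominates each $\mu_k$ and hence any normalized countable convex combination $\sum a_k \mu_k$.

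Given $\mu \in \mathcal{C}_s^{\perp\perp}$, I would carry out a band decomposition of $\mu$. Define $S = \sup\{\kappa(\rea^d) : \kappa \in \mathcal{C}_s^+,\ \kappa \leq \mu\}$, choose a maximizing sequence $\kappa_n$, and replace it by its running maxima $\tilde{\kappa}_n = \kappa_1 \vee \cdots \vee \kappa_n$, which remain in $\mathcal{C}_s^+$ since $\tilde{\kappa}_n \leq \kappa_1 + \cdots + \kappa_n$. The monotone limit $\mu_1 = \lim_n \tilde{\kappa}_n$ then lies in $\mathcal{C}_s^+$ by the countable sum closure, satisfies $\mu_1 \leq \mu$, and has $\mu_1(\rea^d) = S$. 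Setting $\mu_2 = \mu - \mu_1 \geq 0$, maximality of $S$ forces $\rho \wedge \mu_2 = 0$ for every $\rho \in \mathcal{C}_s^+$, since otherwise $(\rho \wedge \mu_2) + \mu_1 \in \mathcal{C}_s^+$ would violate the bound $S$. Hence $\mu_2$ is mutually singular with every element of $\mathcal{C}_s^+$.

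It remains to deduce $\mu_2 = 0$. By the hypothesis $\mu \in \mathcal{C}_s^{\perp\perp}$, this reduces to exhibiting a single Borel set $N \in \mathcal{C}_s^\perp$ on which $\mu_2$ is concentrated, for then $\mu(N) = 0 \geq \mu_2(N) = \mu_2(\rea^d)$. The plan is to select a countable family $\{\nu_k\} \subset \mathcal{A}_s$ that is weak-$*$ dense in each metrizable weak-$*$-compact slice of $\mathcal{A}_s$ consisting of compactly supported probability measures in $B(0, R)$ with Fourier decay constant at most $C$, form the reference measure $\tilde{\nu} = \mu_2 + \sum_k 2^{-k} \nu_k$, and take Radon--Nikodym densities $g = d\mu_2/d\tilde{\nu}$ and $g_k = d\nu_k/d\tilde{\nu}$. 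The singularity $\mu_2 \perp \nu_k$ forces $g\, g_k = 0$ $\tilde{\nu}$-a.e., so the Borel set $N = \{g > 0\}$ carries $\mu_2$ and satisfies $\nu_k(N) = 0$ for every $k$.

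The main obstacle is the final upgrade from $\nu_k(N) = 0$ for the countable family $\{\nu_k\}$ to $\nu(N) = 0$ for \emph{all} $\nu \in \mathcal{A}_s$, which is what is needed to conclude $N \in \mathcal{C}_s^\perp$: weak-$*$ density alone is not strong enough to preserve null-set relations. I would attempt this either by using Lemma~\ref{smoothcutlemma} to reduce to compactly supported $\nu$ and then replacing $N$ by a $G_\delta$ enlargement constructed from outer regularity of the countable family, or by appealing to the general theorem that every band in the $L$-space of finite signed Borel measures on a standard Borel space is a projection band with a concrete measurable representation for its disjoint complement. The $\mathcal{M}_s$ statement follows by running the same argument with the reference family drawn from $\bigcup_{t < s} \mathcal{A}_t$.
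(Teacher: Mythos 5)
Your overall architecture matches the paper's: reduce to the class of measures absolutely continuous to a measure with Fourier decay, decompose $\mu = \mu_1 + \mu_2$ with $\mu_1$ in that class and $\mu_2$ singular to every measure in it (the paper does this via Lemmas~\ref{Ciscpsclemma} and~\ref{decomplemma}; your lattice-theoretic version of the decomposition is sound), handle $\mathcal{M}_s$ formally from the $\mathcal{C}_t$ case, and conclude by showing $\mu_2 = 0$ using $\mu \in \mathcal{C}_s^{\perp\perp}$. But the step you yourself flag as ``the main obstacle'' is the entire content of the theorem, and neither of your proposed escape routes closes it. Singularity of $\mu_2$ to each individual $\nu$ with $\widehat\nu(\xi) \lesssim |\xi|^{-s/2}$ gives a witness set for each $\nu$ separately; what is needed is a \emph{single} Borel set carrying $\mu_2$ that is null for \emph{all} such $\nu$ simultaneously. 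Your first route (outer regularity plus a $G_\delta$ enlargement of $N = \{g > 0\}$) does not deliver this: from the reference measure $\tilde\nu = \mu_2 + \sum_k 2^{-k}\nu_k$, an open $U \supset N$ with $\tilde\nu(U \setminus N) < \delta$ only gives $\nu_k(U) < 2^k\delta$, which is not uniform in $k$, and without uniform smallness along the approximating sequence the portmanteau inequality for open sets tells you nothing about a general $\nu$ in the weak-$*$ closure of the countable family.

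Your second route rests on a false general principle. It is not true that an arbitrary band of measures is characterised by its common null sets: take $\mathcal{C}$ to be the family of all Dirac measures on $\rea$; the band it generates is the purely atomic measures, yet the only Borel set null for every Dirac measure is $\emptyset$, so the ``null-set double perp'' of $\mathcal{C}$ is all of $\probs(\rea)$, strictly larger than the band. Hence any correct argument must use the specific structure of the class $\mathcal{C}_s'$ of measures with $\widehat\nu(\xi) \lesssim |\xi|^{-s/2}$. The paper exploits exactly this: the sets $\left\{ \nu \in \probs([-R,R]^d);\, |\widehat\nu(\xi)| \leq N|\xi|^{-s/2} \right\}$ are weak-$*$ compact and convex, and the separation theorem of Goullet de Rugy (Theorem~\ref{septhm}) produces, for each $R$, disjoint Borel ($F_{\sigma\delta}$) sets separating $\restr{\mu_2}{[-R,R]^d}$ from the whole countable union of these compact convex sets at once; taking the union over $R$ and using Lemma~\ref{smoothcutlemma} to reduce general measures in $\mathcal{C}_s'$ to compactly supported ones then yields a single set $E$ with $\mu_2(E) = 0$ and $E^c \in \mathcal{C}_s'^{\perp}$, whence $\mu(E^c) = 0$ and $\mu_2 = 0$. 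This compactness--convexity separation step is the missing ingredient in your proposal, and it is the one genuinely nontrivial input of the proof.
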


The theorem is trivial for $s = 0$ since
$\mathcal{C}_0 = \mathcal{M}_0 = \probs(\rea^d)$ and
$\mathcal{C}_0^\perp = \mathcal{M}_0^\perp = \{ \emptyset \}$. For general $s$, the
first step in the proof is to reduce the problem using some properties of $\perp$
that are collected in the next lemma.

\begin{lemma}	\label{perppropslemma}
Let $\mathcal{D}$, $\mathcal{D}_1$, $\mathcal{D}_2$ and
$\{ \mathcal{D}_\alpha \}_{\alpha \in I}$ be subsets of either $\probs(\rea^d)$
or $\mathcal{B}(\rea^d)$. Then
\begin{enumerate}[i)]
\item
$\mathcal{D} \subset \mathcal{D}^{\perp\perp}$

\item
$\mathcal{D}_1 \subset \mathcal{D}_2 \implies \mathcal{D}_2^\perp \subset \mathcal{D}_1^\perp$

\item
$\mathcal{D}^{\perp\perp\perp} = \mathcal{D}^\perp$

\item
$\bigcup_{\alpha \in I} \mathcal{D}_\alpha^\perp \subset
\left( \bigcap_{\alpha \in I} \mathcal{D}_\alpha \right)^\perp$

\item
$\bigcap_{\alpha \in I} \mathcal{D}_\alpha^\perp =
\left( \bigcup_{\alpha \in I} \mathcal{D}_\alpha \right)^\perp$.
\end{enumerate}
\end{lemma}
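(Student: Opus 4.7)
The plan is to treat this lemma as a straightforward formal exercise about the Galois connection between $\mathcal{P}(\probs(\rea^d))$ and $\mathcal{P}(\mathcal{B}(\rea^d))$ induced by the relation $\mu \sim E \iff \mu(E) = 0$. All five statements are purely formal consequences of this setup and the definitions, so none of them actually uses properties of measures or Borel sets beyond the annihilation relation. I would prove them in the order in which they depend on one another, namely i), ii), then derive iii), iv), v) from these.

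For i), I would unfold definitions: take any $x \in \mathcal{D}$; by the very definition of $\mathcal{D}^\perp$, every $y \in \mathcal{D}^\perp$ satisfies $x \sim y$ (i.e.\ $\mu(E) = 0$, read in whichever order is appropriate for the type of $x$), and thus $x \in (\mathcal{D}^\perp)^\perp$. For ii), assume $\mathcal{D}_1 \subset \mathcal{D}_2$ and pick $y \in \mathcal{D}_2^\perp$; the annihilation relation that $y$ has with every element of $\mathcal{D}_2$ is inherited in particular by every element of $\mathcal{D}_1$, so $y \in \mathcal{D}_1^\perp$.

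For iii), apply i) to $\mathcal{D}^\perp$ in place of $\mathcal{D}$ to obtain $\mathcal{D}^\perp \subset \mathcal{D}^{\perp\perp\perp}$; for the reverse inclusion, apply ii) to the inclusion $\mathcal{D} \subset \mathcal{D}^{\perp\perp}$ from i) to get $\mathcal{D}^{\perp\perp\perp} \subset \mathcal{D}^\perp$. For iv), observe that $\bigcap_\alpha \mathcal{D}_\alpha \subset \mathcal{D}_\beta$ for every $\beta$, so by ii) $\mathcal{D}_\beta^\perp \subset (\bigcap_\alpha \mathcal{D}_\alpha)^\perp$, and taking the union over $\beta$ gives the claim. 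For v), I would prove both inclusions directly: $y \in (\bigcup_\alpha \mathcal{D}_\alpha)^\perp$ means $y$ annihilates, or is annihilated by, every element of every $\mathcal{D}_\alpha$, which is precisely the statement $y \in \bigcap_\alpha \mathcal{D}_\alpha^\perp$.

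There is no genuine obstacle here; the only thing requiring a tiny bit of care is that $\perp$ switches between the two ambient spaces (measures and Borel sets), so one must remember that $\mathcal{D}^{\perp\perp}$ lives in the same space as $\mathcal{D}$, making statements like i) and iii) type-correct. Once that is noted, each of the five assertions reduces to a one-line unfolding of the definitions, which I would spell out in that order with the dependencies indicated above.
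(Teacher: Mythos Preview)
Your proposal is correct and essentially identical to the paper's proof: the paper likewise sets up a symmetric pairing $(x,y)=\mu(E)$ between the two ambient spaces, proves i) and ii) by unfolding, derives iii) from i) and ii) exactly as you do, obtains iv) from ii), and for v) unfolds the definition via $\mathcal{D}^\perp=\bigcap_{x\in\mathcal{D}}\{x\}^\perp$, which is just a restatement of your direct argument.
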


Property \emph{iii)} implies that $\mathcal{D}^{\perp(k + 2)} = \mathcal{D}^{\perp k}$
for any $k \geq 0$, as illustrated by the following diagram.
\begin{center}
\begin{tikzpicture}
\matrix (m) [
			matrix of math nodes,
			row sep=1em,
			column sep=1em,
			text height=1.5ex, text depth=0.25ex
			] {
			\phantom{^{\perp\perp\perp}}\mathcal{D}\phantom{\perp}
			& \subset
			& \phantom{^\perp}\mathcal{D}^{\perp\perp}\phantom{^\perp}
			& =
			& \mathcal{D}^{\perp\perp\perp\perp}
			& \cdots	\\
			& \phantom{^{\perp\perp}}\mathcal{D}^{\perp}\phantom{^\perp}
			& =
			& \phantom{^\perp}\mathcal{D}^{\perp\perp\perp}
			& =		
			& \cdots \\
		};

\path[>=stealth,->]
        (m-1-1) edge (m-2-2)
        (m-2-2) edge (m-1-3)
        (m-1-3) edge (m-2-4)
        (m-2-4) edge (m-1-5);
\end{tikzpicture}
\end{center}

\begin{proof}
Let $\mathcal{X}$ be the space that the $\mathcal{D}$:s are subsets of
and let $\mathcal{Y}$ be the ``dual'' space, that is,
	$$
	\mathcal{Y} =
	\begin{cases}
	\mathcal{B}(\rea^d)	&\text{if } \mathcal{X} = \probs(\rea^d) \\
	\probs(\rea^d) 		&\text{if } \mathcal{X} = \mathcal{B}(\rea^d).
	\end{cases}
	$$
If $x \in \mathcal{X}$ and $y \in \mathcal{Y}$, then the equation
$\{ x, y \} = \{ \mu, E\}$ determines uniquely a measure $\mu$ and
a set $E$. Thus it is possible to define
	$$
	(x, y) = \mu(E), \qquad \text{where } \{ x, y \} = \{ \mu, E \}. 
	$$

\emph{i) } Let $x \in \mathcal{D}$. Then $(x, y) = 0$ for any
$y \in \mathcal{D}^\perp$, which by definition of $\mathcal{D}^{\perp\perp}$
means that $x \in \mathcal{D}^{\perp\perp}$.

\emph{ii) } Let $y \in \mathcal{D}_2^\perp$. Then $(x, y) = 0$ for all
$x \in \mathcal{D}_2$, and thus $(x, y) = 0$ for all $x \in \mathcal{D}_1$
since $\mathcal{D}_1 \subset \mathcal{D}_2$. Hence $y \in \mathcal{D}_1^\perp$.

\emph{iii) } Applying \emph{ii)} to the statement of \emph{i)} shows that
$\mathcal{D}^{\perp\perp\perp} \subset \mathcal{D}^\perp$ and applying \emph{i)}
to $\mathcal{D^\perp}$ shows that $\mathcal{D}^\perp \subset \mathcal{D}^{\perp\perp\perp}$.

\emph{iv) } From \emph{ii)} it follows that
	$$
	\mathcal{D}_\alpha^\perp \subset \left( \bigcap_{\alpha' \in I} \mathcal{D}_{\alpha'} \right)^\perp
	$$
for any $\alpha \in I$, and hence
	$$
	\bigcup_{\alpha \in I} \mathcal{D}_\alpha^\perp \subset
	\left( \bigcap_{\alpha' \in I} \mathcal{D}_{\alpha'} \right)^\perp.
	$$

\emph{v) } The definition of $\mathcal{D}^\perp$ can be expressed as
	$$
	\mathcal{D}^\perp =
	\bigcap_{x \in \mathcal{D}} \{ y \in \mathcal{Y}; \, (x, y) = 0\} =
	\bigcap_{x \in \mathcal{D}} \{ x \}^\perp,
	$$
and thus
	\begin{align*}
	\left( \bigcup_{\alpha \in I} \mathcal{D}_\alpha \right)^\perp
	=
	\bigcap_{x \in \bigcup_{\alpha \in I} \mathcal{D}_\alpha} \{ x \}^\perp
	=
	\bigcap_{\alpha \in I} \bigcap_{x \in \mathcal{D}_\alpha} \{ x \}^\perp
	=
	\bigcap_{\alpha \in I} \mathcal{D}_\alpha^\perp.
	&\qedhere
	\end{align*}
\end{proof}

Once it has been proved that $\mathcal{C}_s^{\perp\perp} = \mathcal{C}_s$ it
follows by Lemma~\ref{perppropslemma} that $\mathcal{M}_s^{\perp\perp} = \mathcal{M}_s$
as well, for then
	$$
	\mathcal{M}_s^{\perp\perp} =
	\left( \bigcap_{t < s} \mathcal{C}_t \right)^{\perp\perp} \subset
	\left( \bigcup_{t < s} \mathcal{C}_t^\perp \right)^\perp =
	\bigcap_{t < s} \mathcal{C}_t^{\perp\perp} =
	\bigcap_{t < s} \mathcal{C}_t =
	\mathcal{M}_s
	$$
by \emph{ii)}, \emph{iv)} and \emph{v)}, and the opposite inclusion holds by \emph{i)}.
Moreover, if $\mathcal{D}$ has the form $\mathcal{D} = \mathcal{D}'^{\perp\perp}$ then
\emph{iii)} gives
	$$
	\mathcal{D}^{\perp\perp} = \mathcal{D}'^{\perp\perp\perp\perp}
	= \mathcal{D}'^{\perp\perp}
	= \mathcal{D}.
	$$
To prove Theorem~\ref{perpperpthm}, it thus suffices to show that
$\mathcal{C}_s = \mathcal{C}_s'^{\perp\perp}$, where
	$$
	\mathcal{C}_s' = \{ \nu \in \mathcal{P}(\rea^d); \, \widehat\nu(\xi) \lesssim |\xi|^{-s / 2} \}.
	$$
It is easy to see that $\mathcal{C}_s \subset \mathcal{C}_s'^{\perp\perp}$ (see the first part
of the proof of Theorem~\ref{perpperpthm}, on page~\pageref{perpperpproof}), but the other inclusion
takes a bit of work. The idea is to take an arbitrary measure $\mu \in \mathcal{C}_s'^{\perp\perp}$
and decompose it as $\mu = \mu_1 + \mu_2$ such that $\mu_1$ is absolutely continuous
to some measure in $\mathcal{C}_s'$ (thus $\mu_1 \in \mathcal{C}_s$) and $\mu_2$ is singular
to all measures in $\mathcal{C}_s'$, and then show that $\mu_2 = 0$ so that
$\mu = \mu_1 \in \mathcal{C}_s$.

\subsection{Decomposition of $\mu$ with respect to $\mathcal{C}'_s$}
\begin{definition}
A set $\mathcal{C} \subset \mathcal{P}(\rea^d)$ is
\emph{countably quasiconvex} if for any finite
or infinite sequence $(\nu_k)$ in $\mathcal{C}$ there is a sequence $(p_k)$
of positive numbers such that $\sum_k p_k = 1$ and
	$$
	\sum_k p_k \nu_k \in \mathcal{C}.
	$$
\end{definition}

Thus any countable convex combination of measures in a countably quasiconvex
set $\mathcal{C}$ is equivalent to some measure in $\mathcal{C}$.

\begin{lemma}	\label{Ciscpsclemma}
For any $s \in \rea$, the set $\mathcal{C}_s'$ is countably quasiconvex.

\begin{proof}
If $\nu_1, \nu_2, \ldots \in \mathcal{C}_s'$, then there are constants
$C_1, C_2, \ldots \geq 1$ such that for each $k$
	$$
	|\widehat\nu_k(\xi)| \leq C_k |\xi|^{-s / 2}
	\qquad\text{ for all } \xi.
	$$
Now set
	$$
	a_k = \frac{1}{2^kC_k},
	\qquad\qquad
	p_k = \frac{a_k}{\sum_{i = 1}^\infty a_i}.
	$$
Then the probability measure
	$$
	\nu = \sum_{k = 1}^\infty p_k \nu_k
	$$
satisfies
	$$
	|\widehat\nu(\xi)| \leq \sum_{k = 1}^\infty p_k|\widehat\nu_k(\xi)| \leq
	\sum_{k = 1}^\infty p_kC_k|\xi|^{-s / 2} \lesssim |\xi|^{-s / 2},
	$$
so $\nu \in \mathcal{C}_s'$.
\end{proof}
\end{lemma}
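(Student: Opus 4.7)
Given an arbitrary sequence $(\nu_k)$ in $\mathcal{C}_s'$, each $\nu_k$ comes equipped with its own Fourier-decay constant: there exists $C_k \geq 1$ with $|\widehat{\nu}_k(\xi)| \leq C_k |\xi|^{-s/2}$ for all $\xi$. The plan is to produce positive weights $(p_k)$ summing to $1$ so that the convex combination $\nu = \sum_k p_k \nu_k$ lies again in $\mathcal{C}_s'$. Applying the triangle inequality termwise to Fourier transforms gives
$$|\widehat{\nu}(\xi)| \leq \Big(\sum_k p_k C_k\Big) |\xi|^{-s/2},$$
so the task reduces to arranging $\sum_k p_k C_k < \infty$ while keeping $p_k > 0$ and $\sum_k p_k = 1$.

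To achieve this I would absorb the potentially unbounded growth of $C_k$ directly into the weights: set $a_k = (2^k C_k)^{-1}$ and normalize $p_k = a_k / \sum_i a_i$. Then $p_k C_k$ is a fixed multiple of $2^{-k}$, hence $\sum_k p_k C_k < \infty$ and the displayed bound yields $\nu \in \mathcal{C}_s'$. The measure $\nu$ is a genuine probability measure since the $p_k$ are positive and sum to $1$, and the series defining $\widehat{\nu}$ converges absolutely pointwise because each $|\widehat{\nu}_k|$ is bounded by $1$.

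There is no real obstacle here; the content of the argument is the simple observation that a fixed Fourier-decay rate is preserved under countable convex combinations provided the weights dominate the decay constants of the summands. Any summable positive sequence $(\varepsilon_k)$ used in place of $(2^{-k})$ would serve equally well, so there is considerable flexibility in the choice of weights.
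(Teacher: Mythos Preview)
Your proposal is correct and follows essentially the same approach as the paper: choose $a_k = (2^k C_k)^{-1}$, normalize to obtain weights $p_k$, and bound $|\widehat\nu(\xi)|$ by $\bigl(\sum_k p_k C_k\bigr)|\xi|^{-s/2}$, which is finite since $p_k C_k$ is a constant multiple of $2^{-k}$. Your additional remarks about absolute convergence and the flexibility in choosing the summable sequence are accurate and do not alter the argument.
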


\begin{lemma}	\label{decomplemma}
Let $\mathcal{C} \subset \mathcal{P}(\rea^d)$ be countably quasiconvex and
let $\mu \in \mathcal{P}(\rea^d)$. Then there is a set $E \in \mathcal{B}(\rea^d)$
such that
	\begin{align*}
	&\restr{\mu}{E} \ll \nu \quad \text{ for some } \nu \in \mathcal{C}, \qquad\text{ and} \\
	&\restr{\mu}{E^c} \perp \nu \quad \text{ for all } \nu \in \mathcal{C}.
	\end{align*}

\begin{proof}
Let
	$$
	r = \sup \left\{ \mu(F); \, F \in \mathcal{B}(\rea^d) \text{ and }
	\restr{\mu}{F} \ll \nu \text{ for some } \nu \in \mathcal{C} \right\},
	$$
and for $k = 1, 2, \ldots$ let $\nu_k \in \mathcal{C}$ and $F_k \in \mathcal{B}(\rea^d)$
be such that $\restr{\mu}{F_k} \ll \nu_k$ and $\mu(F_k) \geq r - 1 / k$. Set
	$$
	E = \bigcup_{k = 1}^\infty F_k.
	$$

By assumption, there is a sequence $(p_k)$ of positive numbers such that
	$$
	\sum_{k = 1}^\infty p_k \nu_k \in \mathcal{C},
	$$
and since all $p_k$ are positive, $\restr{\mu}{E}$ is absolutely continuous with respect
to this measure.

Suppose towards a contradiction that there is some $\nu \in \mathcal{C}$
such that $\restr{\mu}{E^c} \not \perp \nu$. Then by Lebesgue decomposition
of $\restr{\mu}{E^c}$ with respect to $\nu$ there is a Borel set $S \subset E^c$
such that
	$$
	\restr{\mu}{S} \ll \nu \quad \text{and} \quad \mu(S) > 0.
	$$
For each $k$, there is a $\lambda_k \in (0, 1)$ such that
	$$
	(1 - \lambda_k) \nu + \lambda_k \nu_k \in \mathcal{C},
	$$
and $\restr{\mu}{S \cup F_k}$ is absolutely continuous with respect to
this measure. Moreover,
	$$
	\mu(S \cup F_k) = \mu(S) + \mu(F_k) \geq \mu(S) + r - \frac{1}{k}
	$$
which is greater than $r$ for large enough $k$ --- this is a contradiction.
\end{proof}
\end{lemma}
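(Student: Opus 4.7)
The plan is to imitate the classical proof of the Lebesgue decomposition theorem, using countable quasiconvexity of $\mathcal{C}$ to handle the fact that the ``dominating object'' is now an entire class of measures rather than a single one. First I would introduce
$$
r = \sup \{ \mu(F); \, F \in \mathcal{B}(\rea^d) \text{ and } \restr{\mu}{F} \ll \nu \text{ for some } \nu \in \mathcal{C} \},
$$
which is finite since $\mu$ is a probability measure, and select a maximising sequence of Borel sets $F_k$ with associated measures $\nu_k \in \mathcal{C}$ satisfying $\restr{\mu}{F_k} \ll \nu_k$ and $\mu(F_k) \geq r - 1/k$. The natural candidate for the set in the conclusion is $E = \bigcup_{k} F_k$.

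To verify that $\restr{\mu}{E} \ll \nu$ for some $\nu \in \mathcal{C}$, I would apply countable quasiconvexity to the sequence $(\nu_k)$: it produces strictly positive weights $p_k$ with $\sum_k p_k = 1$ and $\nu := \sum_k p_k \nu_k \in \mathcal{C}$. Since every $p_k$ is positive, each $\nu_k$ is absolutely continuous with respect to $\nu$, hence $\restr{\mu}{F_k} \ll \nu$ for every $k$, and countable additivity of $\mu$ over $(F_k)$ forces $\restr{\mu}{E} \ll \nu$.

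For the singularity statement I would argue by contradiction. If $\restr{\mu}{E^c}$ were not singular to some $\nu' \in \mathcal{C}$, then Lebesgue decomposition of $\restr{\mu}{E^c}$ with respect to $\nu'$ would yield a Borel set $S \subset E^c$ with $\mu(S) > 0$ and $\restr{\mu}{S} \ll \nu'$. Applying quasiconvexity to the two-term sequence $(\nu', \nu_k)$ produces $\lambda_k \in (0,1)$ with $(1 - \lambda_k)\nu' + \lambda_k \nu_k \in \mathcal{C}$, and since $F_k$ and $S$ are disjoint, $\restr{\mu}{F_k \cup S} \ll (1 - \lambda_k)\nu' + \lambda_k \nu_k$. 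But $\mu(F_k \cup S) \geq \mu(S) + r - 1/k$ exceeds $r$ once $1/k < \mu(S)$, contradicting the definition of $r$.

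The only conceptual step is recognising that countable quasiconvexity is precisely the hypothesis that makes the classical exhaustion argument go through: it is invoked both for the infinite-sequence step (combining all of the $\nu_k$ into one dominating $\nu \in \mathcal{C}$) and for the pairwise step (combining $\nu'$ with one $\nu_k$ while staying in $\mathcal{C}$). Beyond spotting this, I do not expect any serious obstacle; the bookkeeping with $S$ being disjoint from the $F_k$ and the weights $\lambda_k$ possibly depending on $k$ is harmless because the contradiction only needs one large enough $k$.
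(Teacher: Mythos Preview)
Your proposal is correct and follows essentially the same argument as the paper: define $r$ as the supremum, pick a maximising sequence $(F_k,\nu_k)$, set $E=\bigcup_k F_k$, use countable quasiconvexity once on the full sequence to dominate $\restr{\mu}{E}$, and once on a pair to derive the contradiction for singularity. The details and the use of quasiconvexity match the paper exactly.
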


\subsection{Proof of Theorem~\ref{perpperpthm}}
The following theorem by Goullet de Rugy \cite{goulletderugy} is used in the proof.

\begin{theorem}[Goullet de Rugy]	\label{septhm}
Let $T$ be a compact Hausdorff space and let $\mathcal{A}$ and $\mathcal{B}$
be subsets of $\probs(T)$ of the form
	$$
	\mathcal{A} = \bigcup_{k = 1}^\infty \mathcal{A}_k, \qquad
	\mathcal{B} = \bigcup_{k = 1}^\infty \mathcal{B}_k,
	$$
where the $\mathcal{A}_k$ and $\mathcal{B}_k$ are weak-$*$ compact
and convex, such that $\mu \perp \nu$ for all $\mu \in \mathcal{A}$,
$\nu \in \mathcal{B}$. Then there exist disjoint $F_{\sigma\delta}$-sets
$T_1, T_2 \subset T$ such that $\mu(T_1) = 1$ for all $\mu \in \mathcal{A}$
and $\nu(T_2) = 1$ for all $\nu \in \mathcal{B}$.
\end{theorem}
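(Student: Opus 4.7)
The plan is to establish the theorem in two steps: first reduce to the case where $\mathcal{A}$ and $\mathcal{B}$ each consist of a single weak-$*$ compact convex piece, and then prove the single-pair case.

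\textbf{Reduction step.} Assuming the single-pair case: for every $(k,l)$ one obtains an $F_\sigma$ set $E_{k,l}\subset T$ with $\mu(E_{k,l})=1$ for all $\mu\in\mathcal{A}_k$ and $\nu(E_{k,l})=0$ for all $\nu\in\mathcal{B}_l$. Set
$$
T_1 \;=\; \bigcap_{l}\bigcup_{k} E_{k,l}.
$$
For $\mu\in\mathcal{A}$ pick $k_0$ with $\mu\in\mathcal{A}_{k_0}$; then $\mu\bigl(\bigcup_k E_{k,l}\bigr)\geq\mu(E_{k_0,l})=1$ for every $l$, so $\mu(T_1)=1$. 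For $\nu\in\mathcal{B}_{l_0}$ one has $\nu(E_{k,l_0})=0$ for every $k$, so $\nu(T_1)=0$. Countable unions of $F_\sigma$ sets are $F_\sigma$, hence $T_1$ is $F_{\sigma\delta}$. The symmetric construction yields an $F_{\sigma\delta}$ set $T_2$ adapted to $\mathcal{B}$; to ensure $T_1\cap T_2=\emptyset$ one builds the two families of single-pair separators simultaneously, choosing at each pair disjoint $F_\sigma$ sets $E_{k,l}$ and $F_{k,l}$ of the two required types.

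\textbf{Single-pair case.} The core assertion is that for weak-$*$ compact convex $\mathcal{A},\mathcal{B}\subset\probs(T)$ with $\mu\perp\nu$ whenever $\mu\in\mathcal{A}$ and $\nu\in\mathcal{B}$, there is an $F_\sigma$ set $E\subset T$ with $\mu(E)=1$ for every $\mu\in\mathcal{A}$ and $\nu(E)=0$ for every $\nu\in\mathcal{B}$. I would attack this using Hahn--Banach in the locally convex space $M(T)=C(T)^*$ with its weak-$*$ topology: disjoint weak-$*$ compact convex subsets are strictly separated by a weak-$*$ continuous functional, that is, by integration against some $f\in C(T)$, giving $\int f\intd{\mu}\geq\alpha$ for $\mu\in\mathcal{A}$ and $\int f\intd{\nu}\leq\beta$ for $\nu\in\mathcal{B}$ with $\alpha>\beta$. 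Mutual singularity should permit iteration: refining $\mathcal{A}$ and $\mathcal{B}$ by intersecting with superlevel or sublevel sets of successive separators, one builds continuous $f_n:T\to[0,1]$ whose superlevel sets are uniformly nearly full for $\mathcal{A}$ and uniformly nearly null for $\mathcal{B}$. Inner regularity of Radon measures on the compact Hausdorff space $T$ then replaces these by a sequence of compact sets $K_n$ with $\inf_{\mu\in\mathcal{A}}\mu(K_n)\to 1$ and $\sup_{\nu\in\mathcal{B}}\nu(K_n)\to 0$; the union $E=\bigcup_n K_n$ is the desired $F_\sigma$ set.

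\textbf{Main obstacle.} The delicate step is the single-pair case, specifically upgrading the qualitative Hahn--Banach separation (a continuous function with a possibly tiny gap) to a \emph{uniform} quantitative separation at the level of Borel sets handling the entire compact convex families simultaneously. Pointwise mutual singularity provides a separating Borel set for each individual pair $(\mu,\nu)$, but selecting these coherently into one $F_\sigma$ set that works against all of $\mathcal{A}\times\mathcal{B}$ requires genuinely combining weak-$*$ compactness with convexity, likely via a nested sequence of refinements. This constructive scheme is the content of Goullet de Rugy's original argument, which in this paper is simply invoked by citation.
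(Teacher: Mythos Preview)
The paper does not prove this theorem; it is quoted from Goullet de Rugy's article and used as a black box in the proof of Theorem~\ref{perpperpthm}. You recognise this yourself in the final sentence of your proposal, so there is no ``paper's own proof'' to compare against.

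As for the sketch itself, two remarks. First, your reduction step has a gap in the disjointness claim: even if at each index pair you choose $E_{k,l}$ and $F_{k,l}$ disjoint, the resulting sets $T_1=\bigcap_l\bigcup_k E_{k,l}$ and $T_2=\bigcap_k\bigcup_l F_{k,l}$ need not be disjoint (a point $x$ can lie in $E_{k(l),l}$ for every $l$ and in $F_{k,l(k)}$ for every $k$ without ever hitting the same index pair). Repairing this while staying inside the $F_{\sigma\delta}$ class is not automatic, since $T_2\setminus T_1$ is only $F_{\sigma\delta}\cap G_{\delta\sigma}$ in general. Second, in the single-pair case the Hahn--Banach separation you invoke gives a continuous $f$ with $\inf_{\mathcal{A}}\int f>\sup_{\mathcal{B}}\int f$, but this gap is between \emph{integrals}, not between values of $f$; there is no obvious superlevel set of $f$ that is simultaneously large for every $\mu\in\mathcal{A}$ and small for every $\nu\in\mathcal{B}$. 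The iteration you allude to (``refining $\mathcal{A}$ and $\mathcal{B}$ by intersecting with superlevel or sublevel sets'') does not make sense as stated, since $\mathcal{A}$ and $\mathcal{B}$ are sets of measures, not subsets of $T$. This is exactly the place where Goullet de Rugy's argument does real work, and your proposal does not yet contain the missing idea.
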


\begin{proof}[Proof of Theorem~\ref{perpperpthm}] \label{perpperpproof}
As remarked in the beginning of the section, it suffices to show that
$\mathcal{C}_s'^{\perp\perp} = \mathcal{C}_s$. For any $\mu \in \mathcal{C}_s$
there is some $\nu \in \mathcal{C}_s'$ such that $\mu \ll \nu$. Then any
$E \in \mathcal{C}_s'^\perp$ is a null set for $\nu$ and hence also for
$\mu$, which means that $\mu \in \mathcal{C}_s'^{\perp\perp}$. Thus
$\mathcal{C}_s \subset \mathcal{C}_s'^{\perp\perp}$.

To see the other inclusion, take any $\mu \in \mathcal{C}_s'^{\perp\perp}$.
By Lemma~\ref{Ciscpsclemma} and Lemma~\ref{decomplemma}, it is possible to write
$\mu = \mu_1 + \mu_2$ such that $\mu_1$ is absolutely continuous to some
measure in $\mathcal{C}_s'$ (hence $\mu_1 \in \mathcal{C}_s$) and $\mu_2$ is singular
to all measures in $\mathcal{C}_s'$. It will be shown that $\mu_2 = 0$, from which
it follows that $\mu = \mu_1 \in \mathcal{C}_s$.

For $N, R \in \{ 1, 2, \ldots \}$ let
	$$
	\mathcal{B}_R^N = \left\{ \nu \in \probs\left([-R, R]^d\right); \,
	\widehat\nu(\xi) \leq N |\xi|^{-s / 2} \text{ for all } \xi \right\}.
	$$
These sets are weak-$*$ compact and convex, so Theorem~\ref{septhm} applied
to
	$$
	\mathcal{A}_R = \left\{ \restr{\mu_2}{[-R, R]^d} \right\}, \qquad
	\mathcal{B}_R = \bigcup_{N = 1}^\infty \mathcal{B}_R^N
	$$
gives for each $R$ a Borel set $E_R$ such that $\mu_2(E_R) = 0$ and
	$$
	\nu(\rea^d \setminus E_R) = \nu([-R, R]^d \setminus E_R) = 0
	\quad
	\text{for all } \nu \in \mathcal{B}_R.
	$$
Let
	$$
	E = \bigcup_{R = 1}^\infty E_R.
	$$
Then $\mu_2(E) = 0$ and $\nu(E^c) = 0$ for all $\nu \in \mathcal{C}'_s$ that
have compact support.

If $\nu$ is \emph{any} measure in $\mathcal{C}'_s$, define the measure
$\nu_R$ by $\intd{\nu_R} = \varphi_R \intd{\nu}$, where $\varphi_R$ for
each $R$ is a smooth function such that
$\chi_{[-R, R]^d} \leq \varphi_R \leq \chi_{[-2R, 2R]^d}$.
Then each $\nu_R$ lies in $\mathcal{C}_s'$ by Lemma~\ref{smoothcutlemma} and
$\nu_R$ has compact support, so
	$$
	\nu(E^c) = \lim_{R \to \infty} \nu(E^c \cap [-R, R]^d)
	\leq \lim_{R \to \infty} \nu_R(E^c) = 0.
	$$
This shows that $E^c \in \mathcal{C}_s'^{\perp}$ and hence $\mu(E^c) = 0$.
Since also $\mu_2(E) = 0$, this implies that $\mu_2 = 0$.
\end{proof}

\section{Acknowledgement}
We would like to thank Magnus Aspenberg for helpful discussions.

\bibliographystyle{plain}
\bibliography{references}
\end{document}